\documentclass[11pt]{article}

\usepackage[T1]{fontenc}
\usepackage[utf8]{inputenc}
\usepackage{lmodern}
\usepackage{microtype}
\usepackage[a4paper,margin=1.05in]{geometry}
\usepackage{amsmath,amssymb,amsthm,mathtools}
\usepackage{mathrsfs}
\usepackage{array,booktabs}
\usepackage{enumitem}
\usepackage{xcolor}
\usepackage{comment}
\usepackage{hyperref}
\usepackage[nameinlink,noabbrev]{cleveref}

\usepackage{framed}
\usepackage{mdframed}
\newtheorem*{ftheo}{Main Theorem}
\newenvironment{fthm}
  {\begin{mdframed}[innertopmargin = 3pt, innerbottommargin=7pt,skipabove=5pt,skipbelow=5pt,linewidth=0.25pt,nobreak=true,align=center]\begin{ftheo}}
  {\end{ftheo}\end{mdframed}}

\usepackage{titlesec}
\titleformat{\subsection}[runin]{\normalfont\large\bfseries}{\thesubsection}{0em}{}

\hypersetup{
  colorlinks=true,
  linkcolor=blue!55!black,
  citecolor=green!35!black,
  urlcolor=blue!65!black,
  pdftitle={}
}

\setlist[itemize]{leftmargin=2em}
\setlist[enumerate]{leftmargin=2.25em}
\allowdisplaybreaks

\newtheorem{theorem}{Theorem}[section]
\newtheorem{proposition}[theorem]{Proposition}
\newtheorem{lemma}[theorem]{Lemma}
\newtheorem{corollary}[theorem]{Corollary}
\newtheorem{definition}[theorem]{Definition}

\newtheorem*{corollary*}{Corollary}

\newcommand{\kk}{\Bbbk}
\newcommand{\ZZ}{\mathbb Z}

\newcommand{\RR}{\mathbb R}
\newcommand{\La}{\Lambda}

\newcommand{\SA}{\mathcal A}
\newcommand{\ST}{\mathfrak t}
\newcommand{\FM}{\mathfrak M}
\newcommand{\FG}{\mathfrak G}
\newcommand{\lr}{\longrightarrow}
\newcommand{\Mat}{\operatorname{Mat}}
\newcommand{\Tor}{\operatorname{Tor}}
\newcommand{\diag}{\operatorname{diag}}
\newcommand{\id}{\mathrm{id}}

\newcommand{\cG}{\mathcal G}

\newcommand{\op}{\mathrm{op}}
\newcommand{\sse}{\subseteq}
\newcommand{\dd}{\operatorname{\partial}}
\newcommand{\st}{\operatorname{st}}
\newcommand{\fg}{\operatorname{fg}}
\newcommand{\dg}{\operatorname{dg}}
\newcommand{\proj}{\operatorname{proj}}
\newcommand{\perf}{\operatorname{Perf}}

\newcommand{\Mod}{\mathrm{Mod}}
\newcommand{\HH}{\operatorname{HH}}
\newcommand{\Der}{\operatorname{Der}}
\newcommand{\im}{\operatorname{im}}
\newcommand{\gr}{\operatorname{gr}}
\newcommand{\Perf}{\operatorname{Perf}}

\newcommand{\Om}{\Omega}
\newcommand{\natq}{\natural}

\newcommand{\DGA}{\operatorname{DGA}_{\kk}}

\title{{Vanishing of higher Legendrian homology for rainbow closures}}
\author{Roger Casals and Alexander Simons}
\date{}

\begin{document}
\maketitle

\begin{abstract}
We show that the higher Legendrian homology of any Legendrian link given by the rainbow closure of a positive braid must necessarily vanish. The result is proven in the most general setting: for any number of strands and crossings, integrally and with one basepoint per component of the Legendrian link.
\end{abstract}

\tableofcontents

%%%%%%%%%%%%%%%%%%%%%%%%%%%%%%%%%%%%%%%%%%%
%%%%%%%%%%%%%%%%%%%%%%%%%%%%%%%%%%%%%%%%%%%
%%%%%%%%%%%%%%%%%%%%%%%%%%%%%%%%%%%%%%%%%%%
\section{Introduction}\label{sec:intro}

The object of this note is to show that the higher Legendrian homology of any Legendrian link given by the rainbow closure of a positive braid must necessarily vanish. This is one of the first complete computations of the homology of a non-trivial Legendrian contact dg algebra for Legendrian links. Heretofore, the derivedness and the full non-commutativity of the Legendrian dg algebra have been the two key challenging aspects when trying to compute the higher homology of Legendrian dg algebras. We respectively address these two challenges by relating Legendrian dg algebras to derived Cohn localizations, and developing new results for these, including the study of non-commutative $LU$-decompositions in the derived setting. The note concludes with a computation of the Hochschild homology groups of the partially wrapped Fukaya categories associated to the max-tb Legendrian $(2,m)$-torus links, illustrating the strength of our results.

%%%%%%%%%%%%%%%%%%%%%%%%%%%%%%%%%%%%%%%%%%%

\subsection{ Scientific context.} Let $\La\sse(\RR^3,\xi_{\st})$ be a Legendrian link in the standard Darboux $(\RR^3,\xi_{\st})$. The study of such Legendrian links is a pillar of contact and symplectic topology in low dimensions, see e.g.~\cite{CasalsGao2022,CasalsGao2023,Chekanov2002,EliashbergPolterovich1996,EliashbergFraser2009,EtnyreHonda2005}. In particular, the Legendrian contact dg algebra, originally developed by Y.~Chekanov in \cite{Chekanov2002}, is a central Legendrian invariant for Legendrian links, see e.g.~ \cite{CasalsNg,Eliashberg2000,etnyre_ng_2020,Ng2008}. Many algebraic and geometric consequences can be extracted from the study of the Legendrian contact dg algebra, as illustrated by the wealth of results studying augmentations, generating families and rulings, $A_\infty$-categories and related structures, see e.g.~\cite{FuchsRutherford2011,FuchsIshkhanov2004,HenryRutherford2015,Leverson2016,Leverson2017,MelvinShrestha2005,NgRutherford2013,NgRutherfordShendeSivekZaslow2017,NgSabloff2006,Sabloff2005,Traynor2001}.\\

It has nevertheless proven difficult to study the Legendrian contact dg algebra in its entirety, and many of the above invariants are obtained by either linearizing the Legendrian contact dg algebra at a point, or studying geometric aspects of a certain subset of lower rank points. Due to its non-commutativity and the highly non-linear differential, the computation of the actual homology groups of many important Legendrian contact dg algebras has remained a challenge. The purpose of this article is to address this. Specifically, let $\beta\in\mbox{Br}_n^+$ be a positive braid word in $n$-strands and $\La_\beta\sse(\RR^3,\xi_{\st})$ the Legendrian link whose front is the rainbow closure of $\beta$, as depicted in \Cref{fig:FrontsBeta3}. This class of Legendrian links $\La_\beta$ contains an abundance of interesting examples, and includes Legendrian representatives of all torus knots, and infinitely many satellite and hyperbolic knots, cf.~\cite[Corollary 1.6]{CasalsGao2022} and \cite{CasalsNg}. In addition, any generic such Legendrian link admits infinitely many Lagrangian fillings.\\

The main contribution of this manuscript is the complete computation of the homology of the Legendrian contact dg algebras for such Legendrian links $\La_\beta\sse(\RR^3,\xi_{\st})$. We will consider the most general, and thus challenging, type of Legendrian contact dg algebra, which is also the most relevant at the geometric level: fully non-commutative, defined over $\ZZ$, and with one basepoint per component. This choice also makes our main result useful for the computation of partially wrapped Fukaya categories, see e.g.~\cite{Ganatraetal2023,Sylvan2016}, as illustrated in \Cref{sec:examples_applications}.

\begin{center}
	\begin{figure}[h!]
		\centering
		\includegraphics[scale=0.8]{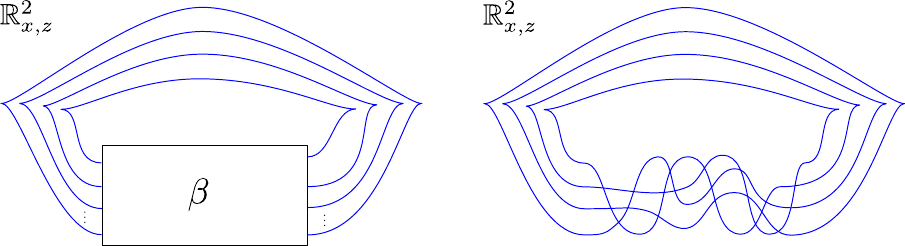}
		\caption{(Left) The rainbow closure of a positive braid word $\beta$, which is the front defining the Legendrian link $\La_\beta\sse(\RR^3,\xi_{\st})$. (Right) An instance for the 4-stranded positive braid word $\beta=\sigma_3\sigma_2(\sigma_1\sigma_2\sigma_3)^2\sigma_2\sigma_3\sigma_2\sigma_1\sigma_3\sigma_2\sigma_1\sigma_2\sigma_3\sigma_2\sigma_3\in\mbox{Br}_4^+$.}
		\label{fig:FrontsBeta3}
	\end{figure}
\end{center}

%%%%%%%%%%%%%%%%%%%%%%%%%%%%%%%%%%%%%%%%%%%
\subsection{ Main result.}  Let $\beta\in\mbox{Br}_n^+$ be a positive braid word and $\La_\beta\sse(\RR^3,\xi_{st})$ the Legendrian link whose front is the rainbow closure of $\beta$, as depicted in \Cref{fig:FrontsBeta3}. See also \cite[Section 2.2]{CasalsNg}. Let $\SA(\La_\beta)$ be the $\ZZ$-graded fully non-commutative Legendrian contact dg algebra over $\ZZ$ of the Legendrian link $\La_\beta$, where we choose exactly one basepoint per component of $\La_\beta$, cf.~\cite{etnyre_ng_2020} or \cite[Section 3.1]{CasalsNg}. The Maslov potential, which provides the $\ZZ$-grading, is chosen to be the standard Maslov potential, i.e.~each generator associated to a crossing of $\beta$ has degree 0 and each generator associated to a right cusp has degree 1, cf.~\cite[Section 5]{CasalsNg}. The main result we prove reads as follows:

\begin{fthm}\label{thm:main} Let $\beta\in\mbox{Br}_n^+$ be a positive braid word, $\La_\beta\sse(\RR^3,\xi_{st})$ its Legendrian link, and $(\SA(\La_\beta),\dd)$ its Legendrian contact dg algebra over $\ZZ$, with one basepoint per component. Then the higher homology of $(\SA(\La_\beta),\dd)$ vanishes, i.e.~
$$H_i(\SA(\La_\beta),\dd)=0,\quad \forall i\geq1.$$
\end{fthm}

The Main Theorem will be proven in two steps. First, we show in \Cref{ssec:Legendrian_DGA_derived_Cohn} that $(\SA(\La_\beta),\dd)$ is quasi-isomorphic to a certain derived Cohn localization. Second, we prove in \Cref{ssec:derived_Cohn_underived} that such derived Cohn localization is quasi-isomorphic to the underived universal localization, by leveraging the explicit hereditary properties present in the construction of $(\SA(\La_\beta),\dd)$. As a technical point, the Main Theorem will be established first over an arbitrary field $\kk$, and then we show that the corresponding higher vanishing also lifts integrally. The starting point for our argument is the description of $(\SA(\La_\beta),\dd)$ in our previous work \cite{CasalsNg}, see \Cref{ssec:dga_rainbow} below.\\

A direct consequence of the Main Theorem is:

\begin{corollary}\label{cor:main1}
The Legendrian dg algebra $(\SA(\La_\beta),\dd)$ of the rainbow closure of a positive braid word $\beta$ is quasi-isomorphic to the $\ZZ$-algebra $H_0(\SA(\La_\beta),\dd)$.
\end{corollary}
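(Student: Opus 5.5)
The corollary follows from the Main Theorem once we use the fact that $(\SA(\La_\beta),\dd)$ is concentrated in non-negative degrees. With the standard Maslov potential, every crossing generator has degree $0$ and every right-cusp generator has degree $1$. The basepoint generators $t_i^{\pm1}$ also have degree $0$. So $\SA(\La_\beta)$ is a connective dg algebra: $\SA_i=0$ for $i<0$, and the differential has degree $-1$. For such a dg algebra the truncation gives a natural zig-zag
$$\SA(\La_\beta)\;\longleftarrow\;\tau_{\geq 0}\SA(\La_\beta)=\SA(\La_\beta)\;\longrightarrow\;H_0(\SA(\La_\beta),\dd),$$
where the right arrow is the projection $\SA_0\to \SA_0/\dd(\SA_1)=H_0$, extended by zero in positive degrees. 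The plan is to check that this projection is a map of dg algebras and that it is a quasi-isomorphism.

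First, the projection $\pi:\SA(\La_\beta)\to H_0(\SA(\La_\beta),\dd)$ is defined on the degree-$0$ part as the quotient by the two-sided ideal $\dd(\SA_1)$, and as zero in positive degrees.
\begin{enumerate}
\item The subspace $\dd(\SA_1)\subseteq \SA_0$ is a two-sided ideal of $\SA_0$. By the Leibniz rule, $\dd(a x)=a\,\dd x$ for $a\in\SA_0$ and $x\in\SA_1$, because $\dd a=0$ for degree reasons.
\item $\SA_{>0}\oplus\dd(\SA_1)$ is a two-sided dg ideal of $\SA(\La_\beta)$. It is closed under multiplication by the whole algebra, since degrees add and are non-negative. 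It is closed under $\dd$, since $\dd(\SA_1)\subseteq \dd(\SA_1)$, $\dd(\SA_{\geq 2})\subseteq \SA_{>0}$, and $\dd$ vanishes on $\dd(\SA_1)$.
\end{enumerate}
Hence $\pi$ is a surjective morphism of dg algebras onto $H_0$, which we regard as a dg algebra concentrated in degree $0$ with zero differential.

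Next, $\pi$ induces the identity on $H_0$ by construction. On $H_i$ for $i\geq 1$, the source has $H_i(\SA(\La_\beta),\dd)=0$ by the Main Theorem, and the target is zero in positive degrees. In negative degrees both sides vanish. So $\pi$ is a quasi-isomorphism of dg $\ZZ$-algebras, and it is a direct one-step quasi-isomorphism rather than a zig-zag.

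The only point needing care is the connectivity of $\SA(\La_\beta)$. This is where the choice of the standard Maslov potential and one basepoint per component enters: the invertible basepoint variables sit in degree $0$, and no generator has negative degree. I would verify this from the description of $(\SA(\La_\beta),\dd)$ recalled in \Cref{ssec:dga_rainbow}, following \cite[Section 5]{CasalsNg}. This is the main, though mild, obstacle; everything else is formal.
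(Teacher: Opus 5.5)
Your proposal is correct and is the argument the paper intends: the paper gives no written proof and calls the corollary a direct consequence of the Main Theorem. The implicit reason is the one you spell out. $\SA(\La_\beta)$ is concentrated in degrees $0$ and $1$ under the stated Maslov potential, and vanishing higher homology then makes the projection $\SA(\La_\beta)\to H_0(\SA(\La_\beta),\dd)$ a quasi-isomorphism of dg $\ZZ$-algebras.
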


The dg category $\perf(\SA(\La_\beta))$ of perfect complexes over the dg algebra $(\SA(\La_\beta),\dd)$ is particularly important in the study of 4-dimensional symplectic topology. In particular, it is a model for the partially wrapped Fukaya category of the standard Darboux 4-ball $(\mathbb{D}^4,\omega_{\st})$ stopped at a ribbon of the Legendrian link $\La_\beta$, cf.~\cite{Ganatraetal2023,Sylvan2016}. To be precise, here we denote by $\perf(A)$ the idempotent-complete pretriangulated dg category of compact right dg $A$-modules. \Cref{cor:main1} implies the following:

\begin{corollary}\label{cor:main2}
Let $(\SA(\La_\beta),\dd)$ be the Legendrian dg algebra of the rainbow closure of a positive braid word $\beta$, and $A_\beta:=H_0(\SA(\La_\beta),\dd)$ its degree 0 homology algebra. Then there exists a dg quasi-equivalence
\begin{equation}\label{eq:derived_equiv_to_H0}
\perf(\SA(\La_\beta))\simeq C_{\dg}^b(\proj_{\fg}(A_\beta))
\end{equation}
\end{corollary}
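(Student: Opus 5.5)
Corollary~\ref{cor:main1} provides a zig-zag of dg algebra quasi-isomorphisms $\SA(\La_\beta)\simeq A_\beta$, with $A_\beta=H_0(\SA(\La_\beta),\dd)$ concentrated in degree $0$. The plan is to transport this along derived module categories and then identify $\perf$ of an ordinary ring with bounded complexes of finitely generated projectives.

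\textbf{Step 1: pass to $\perf(A_\beta)$.} Since $\SA(\La_\beta)$ is connective (all generators sit in degrees $0$ and $1$), the truncation gives a direct zig-zag. Let $\tau_{\geq 0}$ be the connective cover. Because $\SA(\La_\beta)$ is already non-negatively graded, we have $\tau_{\geq0}\SA(\La_\beta)=\SA(\La_\beta)$. The natural projection $\SA(\La_\beta)\to H_0(\SA(\La_\beta))=A_\beta$ is a map of dg algebras, since the degree $0$ part modulo the image of $\dd$ from degree $1$ is a quotient algebra. By the Main Theorem this projection is a quasi-isomorphism.

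For any quasi-isomorphism of dg algebras $f\colon A\to B$, extension of scalars $-\otimes^{\mathbb L}_A B$ is a dg quasi-equivalence $D_{\dg}(A)\to D_{\dg}(B)$. Its quasi-inverse is restriction, and the unit and counit are quasi-isomorphisms because $A\to B$ is one. This functor sends $A$ to $B$, so it restricts to a quasi-equivalence between the thick (idempotent-complete) subcategories generated by these objects, namely $\perf(A)\simeq\perf(B)$. Applied to the projection, this gives $\perf(\SA(\La_\beta))\simeq\perf(A_\beta)$.

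\textbf{Step 2: identify $\perf(A_\beta)$ for the ordinary ring $A_\beta$.} This is the standard fact that compact objects of $D(R)$, for a ring $R$, are exactly the complexes quasi-isomorphic to bounded complexes of finitely generated projectives. In dg-enhanced form, $C^b_{\dg}(\proj_{\fg}(R))\to\perf(R)$ is a quasi-equivalence, argued as follows.

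Bounded complexes of finitely generated projectives are K-projective. Hence the dg functor from $C^b_{\dg}(\proj_{\fg}(R))$ into the dg category of K-projective modules is fully faithful on the nose at the level of Hom complexes.

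Its image contains $R$ and is closed under shifts and cones. It is also closed under direct summands: a summand of a bounded complex of finitely generated projectives is isomorphic in $D(R)$ to one. This follows from the standard argument that such complexes are closed under retracts up to homotopy, since a homotopy idempotent on a bounded complex of finitely generated projectives splits after adding a contractible complex; see e.g.\ Thomason–Trobaugh's treatment of perfect complexes. Consequently the image is exactly the thick subcategory generated by $R$, which is $\perf(R)$.

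\textbf{Main obstacle.} The main subtlety is Step~2's idempotent-completeness. The cleanest route is to cite the standard identification of compact objects of $D(R)$ with perfect complexes, rather than reprove it. The remainder is formal once the Main Theorem is known.
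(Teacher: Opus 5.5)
Your proposal is correct and is essentially the paper's argument: the paper deduces the statement from Corollary~\ref{cor:main1}, which gives $\perf(\SA(\La_\beta))\simeq\perf(A_\beta)$, combined with the standard quasi-equivalence $\perf(A_\beta)\simeq C^b_{\dg}(\proj_{\fg}(A_\beta))$ for an ordinary ring. You spell out what the paper leaves implicit: the direct projection $\SA(\La_\beta)\to A_\beta$ available by connectivity, invariance of $\perf$ under quasi-isomorphism, and closure under direct summands. For the last of these, citing the standard fact is the right move, since your ``splits after adding a contractible complex'' remark is a consequence of the idempotent-completeness of $K^b(\proj_{\fg}(A_\beta))$ rather than a proof of it.
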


Indeed, since the $0$th homology group $A_\beta:=H_0(\SA(\La_\beta),\dd)$ is an ordinary (underived) algebra, $\perf(H_0(\SA(\La_\beta)))$ is quasi-equivalent to $C_{dg}^b(\mbox{proj}_{fg}(A_\beta))$. Here $\mbox{proj}_{fg}(A_\beta)$ is the additive category of finitely generated projective (right) modules over $A_\beta$ and $C_{dg}^b$ the standard dg enhancement, whose objects are bounded complexes of finitely generated projective $A_\beta$-modules and whose morphism complexes are the usual Hom-complexes. At the level of homotopy categories \Cref{cor:main2} implies
\begin{equation*}\label{eq:derived_equiv_to_H0}
H^0(\perf(\SA(\La_\beta)))\simeq K^b(\proj_{\fg}(A_\beta)).
\end{equation*}

Thus we can study many key properties of $\perf(\SA(\La_\beta))$ by considering bounded complexes of finitely generated projective $A_\beta$-modules, which is significantly simpler. In particular, it follows from \Cref{cor:main2} that the derived invariants of $\perf(\SA(\La_\beta))$, such as Hochschild homology and its variants, or algebraic $K$-theory, can be computed using homological algebra for classical (underived) algebras and modules. In order to illustrate this, we compute in \Cref{sec:examples_applications} the entire Hochschild homology of the partially wrapped Fukaya category associated to the max-tb Legendrian $(2,m)$-torus link. For instance, we will show that the Main Theorem and \Cref{cor:main2} imply that all such Hochschild homology groups vanish in degrees above one.\\

%%%%%%%%%%%%%%%%%%%%%%%%%%%%%%%%%%%%%%%%%%%
%%%%%%%%%%%%%%%%%%%%%%%%%%%%%%%%%%%%%%%%%%%
\noindent {\bf Acknowledgements}. R.C.~thanks the hospitality of PCMI-IAS during the research program ``Knotted Surfaces in Four-Manifolds'', where parts of this article were developed. R.C.~is supported by the National Science Foundation under the grants DMS-1942363 and DMS-2505760, and a UC Davis College of L\&S Dean's Fellowship.\hfill$\Box$

%%%%%%%%%%%%%%%%%%%%%%%%%%%%%%%%%%%%%%%%%%%
%%%%%%%%%%%%%%%%%%%%%%%%%%%%%%%%%%%%%%%%%%%
\vspace{0.5cm}
\noindent {\bf Convention}. All rings are associative, unital, and homomorphisms preserve the
unit. dg algebras (DGAs) are homologically graded and their differentials have degree
\(-1\). Throughout the article $\kk$ is a field, and it is allowed to be of finite characteristic. All dg algebras will be considered over $\ZZ$ or a field $\kk$, and be $\ZZ$-graded. The natural number $n\in\mathbb{N}$ will always denote the number of strands of the positive braids and $m\in\mathbb{N}$ their length.\hfill$\Box$
%%%%%%%%%%%%%%%%%%%%%%%%%%%%%%%%%%%%%%%%%%%
%%%%%%%%%%%%%%%%%%%%%%%%%%%%%%%%%%%%%%%%%%%
%%%%%%%%%%%%%%%%%%%%%%%%%%%%%%%%%%%%%%%%%%%
\section{A preliminary lemma}\label{sec:preliminary}

%For a matrix $M=(m_{ij})\in\mbox{Mat}_\ell(S)$ with entries $m_{ij}$, we denote by $S\langle M\rangle$ the result of freely adjoining all the entries $m_{ij}$ to $S$, i.e.~$S\langle M\rangle:=S\langle m_{11},\ldots,m_{ij},\ldots m_{\ell \ell}\rangle$.

Let $\beta\in\mbox{Br}_n^+$ be a positive braid word, $\beta=\sigma_{i_1}\cdot\ldots\cdot\sigma_{i_m}$ in the positive Artin generators. Consider the Legendrian link $\La_\beta\sse(\RR^3,\xi_{st})$ whose front is the rainbow closure of $\beta$, as detailed in \cite[Section 2.2]{CasalsNg}. Depending on the basepoints, there are two Legendrian dg algebras that we consider for $\La_\beta$, as follows:

\begin{enumerate}[label=$(\roman*)$]
    \item The $\ZZ$-graded fully non-commutative Legendrian dg algebra $\SA(\La_\beta)$, where we choose exactly one basepoint per component of $\La_\beta$.

    \item The $\ZZ$-graded fully non-commutative Legendrian dg algebra $\SA(\La_\beta,\ST_{s})$, where we choose exactly one basepoint per strand of $\beta$, as in \Cref{fig:FrontsBeta2}(left).
\end{enumerate}

\noindent The Main Theorem is concerned with $\SA(\La_\beta)$ in $(i)$ above. That said, the proof for $\SA(\La_\beta)$ actually involves first proving the result for $\SA(\La_\beta,\ST_{s})$, as in $(ii)$ above. Note that both Legendrian dg algebras can be $\ZZ$-graded because the rotation class vanishes for any Legendrian link of the form $\La=\La_\beta$.\\

The Main Theorem is a statement about $\SA(\La_\beta)$ as an algebra over $\ZZ$. That said, the proof will first show that the statement holds for $\SA(\La_\beta,\ST_{s})$, and equivalently $\SA(\La_\beta)$, defined over a field $\kk$. Then we will argue that the vanishing also holds integrally, cf.~\Cref{ssec:proof_of_main_theorem}. Therefore, unless otherwise stated, in \Cref{sec:preliminary} and \Cref{sec:main_proof}, the symbols $\SA(\La_\beta)$ and $\SA(\La_\beta,\ST_{s})$ always denote the algebras defined over a fixed commutative field $\kk$, of any characteristic.

%%%%%%%%%%%%%%%%%%%%%%%%%%%%%%%%%%%%%%%%%%%
%%%%%%%%%%%%%%%%%%%%%%%%%%%%%%%%%%%%%%%%%%%
\subsection{ Legendrian dg algebra for rainbow closures.}\label{ssec:dga_rainbow} In this subsection we prove in \Cref{prop:dga-change} an explicit model for the quasi-isomorphism type of the Legendrian dg algebra of a rainbow closure which will be useful to prove the Main Theorem. Let \(\beta=\sigma_{i_1}\cdots\sigma_{i_m}\) be a positive \(n\)-stranded braid word. Following \cite[Section 5.1]{CasalsNg}, we introduce the following matrices:

\begin{definition}[Path matrices]\label{def:pathmatrix}
Let $n\in\mathbb{N}$, $i\in[1,n-1]\in\mathbb{N}$ and $z$ a variable over a ring $R$. The crossing matrix $P_i(z)\in \mbox{GL}(n,R\langle z\rangle)$ is defined as
$$
(P_i(z))_{jk} := \begin{cases} 1 & j=k \text{ and } j\neq i,i+1 \\
1 & (j,k) = (i,i+1) \text{ or } (i+1,i) \\
z & j=k=i+1 \\
0 & \text{otherwise;}
\end{cases},\qquad\mbox{i.e.}\quad
P_i(z):=\left(\begin{matrix}
1 & \cdots  & & & \cdots & 0\\
\vdots & \ddots & & & & \vdots\\
0 & \cdots & 0 & 1 & \cdots & 0\\
0 & \cdots & 1 & z & \cdots & 0\\
\vdots &  & & &\ddots & \vdots\\
0 & \cdots & & & \cdots & 1\\
\end{matrix}\right).
$$
Given a positive braid word $\beta=\sigma_{i_1}\cdots\sigma_{i_m} \in \mbox{Br}_{n}^{+}$  and $z_{1}, \dots, z_{m}$ free non-commuting indeterminate variables over $R$, we define the path matrix $P_{\beta}(z_1,\ldots,z_m)\in\mbox{GL}(n,R\langle z_1,\ldots,z_m\rangle)$ to be the product
$$
P_{\beta}(z_1,\ldots,z_m):=P_{i_1}(z_1)\cdots P_{i_m}(z_m).$$
\hfill$\Box$
\end{definition}

\noindent Note that the crossing matrix $P_i(z)$ is invertible, with two-sided inverse:
$$
(P_i^{-1}(z))_{jk} := \begin{cases} 1 & j=k \text{ and } j\neq i,i+1 \\
1 & (j,k) = (i,i+1) \text{ or } (i+1,i) \\
-z & j=k=i \\
0 & \text{otherwise;}
\end{cases},\mbox{ i.e.}\quad
P_i^{-1}(z):=\left(\begin{matrix}
1 & \cdots  & & & \cdots & 0\\
\vdots & \ddots & & & & \vdots\\
0 & \cdots & -z & 1 & \cdots & 0\\
0 & \cdots & 1 & 0 & \cdots & 0\\
\vdots &  & & &\ddots & \vdots\\
0 & \cdots & & & \cdots & 1\\
\end{matrix}\right),
$$
\noindent and thus the inverse of $P_{\beta}(z_1,\ldots,z_m)$ is $P^{-1}_{\beta}(z_1,\ldots,z_m)=P_{i_m}^{-1}(z_m)\cdots P_{i_1}^{-1}(z_1)$.

\begin{table}[htpb]
    \centering
    \begin{tabular}{lcc}
        \toprule
        \textbf{Generators} & \textbf{Degree} & \textbf{Index range} \\
        \midrule
        $z_{i}$ & 0 & $i\in [1,m]$ \\
        $d_i^{\pm1}$ & 0 & $i\in [1,n]$ \\
        $\ell_{ij}$ & 0 & $j<i$ \\
        $u_{ij}$ & $0$ & $i<j$ \\
        $b_{ij}$ & $1$ & $i,j\in [1,n]$ \\
        \bottomrule
    \end{tabular}
    \caption{List of generators for \(\cG(\beta)\) and their corresponding degrees. In rows three and four the range for $i,j\in[1,n]$ is constrained to $j<i$ and $i<j$ respectively. The only relations satisfied by the $d_i$-generators are $d_id_i^{-1}=d_i^{-1}d_i=1$.}
    \label{tab:generators}
\end{table}

The path matrices in \Cref{def:pathmatrix} can be used to describe $\SA(\La_\beta,\ST_{s})$ explicitly. Before that, let us introduce a second definition:

\begin{definition}\label{def:gauss-dga}
Let $\beta\in\mbox{Br}^+_n$ be a positive braid word in $n$-strands. By definition, the dg algebra \(\cG(\beta)\) is the free $\ZZ$-graded algebra over $\ZZ$ with graded generators as in \Cref{tab:generators} and differential
\begin{equation}\label{eq:gauss-diff}
 \partial B=P^{-1}_\beta(z_1,\ldots,z_m)+DLU,
 \end{equation}
 where the differential in \eqref{eq:gauss-diff} is extended by the graded Leibniz rule, and it is declared to vanish on any generators of degree 0. \Cref{eq:gauss-diff} is the compressed identity for the matrix equality
 {\tiny
 \begin{equation*}
 \begin{pmatrix}
\dd b_{11} & \dd b_{12} & \dots & \dd  b_{1n} \\
\dd b_{21} &\dd  b_{22} & \dots &\dd  b_{2n} \\
\vdots & \vdots & \ddots & \vdots \\
\dd b_{n1} &\dd  b_{n2} & \dots &\dd  b_{nn}
\end{pmatrix}
=
\begin{pmatrix}
p_{11} & p_{12} & \dots & p_{1n} \\
p_{21} & p_{22} & \dots & p_{2n} \\
\vdots & \vdots & \ddots & \vdots \\
p_{n1} & p_{n2} & \dots & p_{nn}
\end{pmatrix}
+
\begin{pmatrix}
d_{1} & 0 & \dots & 0 \\
0 & d_{2} & \dots & 0 \\
\vdots & \vdots & \ddots & \vdots \\
0 & 0 & \dots & d_{n}
\end{pmatrix}
\begin{pmatrix}
1 & 0 & \dots & 0 \\
\ell_{21} & 1 & \dots & 0 \\
\vdots & \vdots & \ddots & \vdots \\
\ell_{n1} & \ell_{n2} & \dots & 1
\end{pmatrix}
\begin{pmatrix}
1 & u_{12} & \dots & u_{1n} \\
0 & 1 & \dots & u_{2n} \\
\vdots & \vdots & \ddots & \vdots \\
0 & 0 & \dots & 1
\end{pmatrix}
 \end{equation*}
 }
\noindent where $p_{ij}$ is the $(i,j)$-entry of $P_\beta^{-1}$, $B:=(b_{ij})$ and
  \[
 D:=\diag(d_1,\ldots,d_n),\qquad
 L:=(\ell_{ij})\text{ lower unitriangular},\qquad
 U:=(u_{ij})\text{ upper unitriangular}.
\]

 \hfill$\Box$
 \end{definition}

Note that in \Cref{def:gauss-dga} we used {\it free graded algebra} but, technically, the generators $d_i$ are adjoined as being invertible. That is, we are adjoining $d_i$ and $d_i^{-1}$ with $d_id_i^{-1}=d_i^{-1}d_i=1$. Except for these, there are no relations among any of the generators in \Cref{tab:generators}, thus our use of the term {\it free} above.

\begin{center}
	\begin{figure}
		\centering
		\includegraphics[scale=0.8]{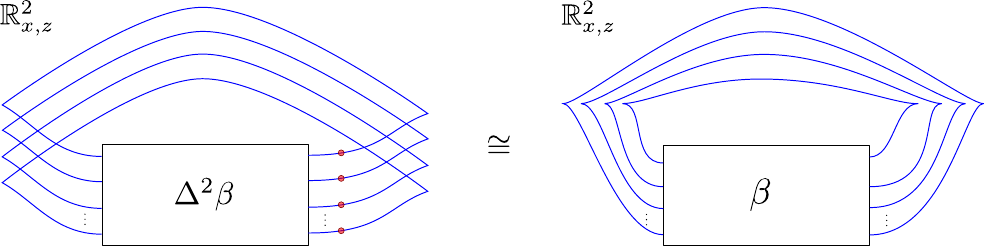}
		\caption{(Left) The $(-1)$-closure of $\Delta^2\beta$, where $\Delta$ is any positive braid word representing the half-twist in the same number of strands as $\beta$. The basepoints, one per strand, are drawn as red dots and located to the right of $\Delta^2\beta$, as depicted. (Right) The rainbow closure of $\beta$, which is Legendrian isotopic to the $(-1)$-closure of $\Delta^2\beta$. There is a Legendrian isotopy between the Legendrian link associated to the front on the left and the Legendrian link associated to the front on the right, see e.g.~\cite[Section 2]{CasalsNg}.}
		\label{fig:FrontsBeta2}
	\end{figure}
\end{center}

\begin{lemma}
\label{prop:dga-change}
Let $\beta\in\mbox{Br}_n^+$ be a positive braid word, $\La_\beta\sse(\RR^3,\xi_{st})$ its associated Legendrian link, and $\SA(\La_\beta,\ST_s)$ its Legendrian contact dg algebra over $\ZZ$, with one basepoint per strand. Then there exists a quasi-isomorphism
\begin{equation}
\SA(\La_\beta,\ST)\simeq\cG(\beta).
\end{equation}
\end{lemma}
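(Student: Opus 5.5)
We will start from the explicit presentation of $\SA(\La_\beta,\ST_s)$ obtained in \cite[Section 5]{CasalsNg}. We use the $(-1)$-closure of $\Delta^2\beta$ with one basepoint per strand, as in \Cref{fig:FrontsBeta2}(left), which is Legendrian isotopic to $\La_\beta$. Since the Legendrian dg algebra is invariant up to stable tame isomorphism, and hence up to quasi-isomorphism, we may compute with this front.

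In that presentation the generators are as follows. There are degree $0$ generators $z_1,\dots,z_m$ for the crossings of $\beta$, and degree $0$ generators $w_1,\dots,w_{\binom n2\cdot 2}$ for the crossings of $\Delta^2$. There are invertible basepoint variables $t_1^{\pm1},\dots,t_n^{\pm1}$, and degree $1$ generators $b_{ij}$, one for each pair $(i,j)$, coming from the Reeb chords of the $(-1)$-closure. By \cite[Prop.~5.x]{CasalsNg}, the differential has matrix form
\[
\dd B = D_t + P_{\Delta^2\beta}(w,z)\quad(\text{up to signs and the order of factors}),
\]
with $D_t=\diag(t_1,\dots,t_n)$. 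This is the shape we will use.

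The first step rewrites this as $\dd B=\pm\big(D_t P_{\Delta}^{-1}\cdots\big)$. Concretely, we multiply on the left by $P_\beta^{-1}$; this is an invertible change of the degree $1$ generators $B\mapsto P_\beta^{-1}B\,(\cdot)$, a tame automorphism. The result is $\dd B' = P_\beta^{-1}+ M(w,t)$, where $M$ is built from $P_{\Delta^2}$ and $D_t$.

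The key algebraic input is that $P_\Delta(w)$ and $P_\Delta(w')$ for the two half twists give a parametrization of the big Bruhat cell. Factoring $\Delta^2=\Delta\cdot\Delta$, the path matrix of a reduced word for $\Delta$ has anti-diagonal shape. Suitable products $P_\Delta(w)P_\Delta(w')$ times a diagonal matrix can therefore be written uniquely as $DLU$. Here the entries of $L$ and $U$ are polynomial in $w,w'$ after triangular changes of variables, and $D$ is a monomial in the $t_i$ times signs. So we seek an explicit DGA isomorphism (tame, degree $0$ only) from $\kk\langle w,w',t^{\pm1}\rangle$ to $\kk\langle \ell_{ij},u_{ij},d_i^{\pm1}\rangle$ sending $M$ to $DLU$.

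We will build this inductively on $n$. First we write $\Delta_n=\Delta_{n-1}\,\sigma_{n-1}\cdots\sigma_1$ and use the identity $P_i(z)=\text{(unipotent)}\cdot\text{(permutation)}$. Then we check that the new variables from the extra row and column of crossings become exactly the new entries $\ell_{n1},\dots,\ell_{n,n-1}$ and $u_{1n},\dots,u_{n-1,n}$, up to triangular substitutions. These substitutions are of the form $x\mapsto x+(\text{polynomial in earlier variables})$, so they are elementary automorphisms. The diagonal part is absorbed by setting $d_i=\pm t_{\sigma(i)}$.

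Since all degree $0$ changes are algebra automorphisms and the degree $1$ change is invertible and linear over the degree $0$ subalgebra, the composite is a dg isomorphism $\SA(\La_\beta,\ST_s)\cong\cG(\beta)$. In particular it is a quasi-isomorphism.

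The main obstacle is the non-commutative Bruhat/Gauss factorization step. We must verify that $P_{\Delta^2}$ (with basepoints) equals $DLU$ through a variable change that is genuinely invertible over the free non-commutative algebra, not merely generically or after localization. The concern is that $LU$-decomposition usually requires inverting pivots. We expect this is avoided because $\Delta^2$ already contains enough crossing variables: the pivots come out as the $t_i$, which are already invertible. We will check this carefully in the $n=2$ case, $P_1(w_1)P_1(w_2)=\begin{pmatrix}1&w_2\\ w_1&1+w_1w_2\end{pmatrix}$, and then run the induction. If a direct isomorphism fails, the fallback is to produce stabilizations, that is, to cancel extra generator pairs $(x,\dd x)$, which still yields a quasi-isomorphism.
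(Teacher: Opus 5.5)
Your plan is essentially the paper's proof: isotope to the $(-1)$-closure of $\Delta^2\beta$, use the Casals--Ng differential $\partial C=\mathrm{Id}_n+P_{\Delta^2\beta}T$, change the degree-one generators by $B=TCT^{-1}P_\beta^{-1}$ to get $\partial B=P_\beta^{-1}+TP_{\Delta^2}$, and match $TP_{\Delta^2}$ with $DLU$ via $d_i=t_i$ (note that $P_\beta^{-1}$ must be applied on the right, not the left as you wrote, since $P_{\Delta^2\beta}=P_{\Delta^2}P_\beta$). The step you flag as the main obstacle is supplied in the paper by citing \cite[Lemma 2.3]{CGGS24} for $P_{\Delta^2}=LU$ with $L,U$ unitriangular and trivial diagonal, so all pivots are $1$ and no localization arises; your anti-diagonal observation, made precise as $P_\Delta=(\text{lower unitriangular})\cdot\Pi$ with $\Pi$ the antidiagonal permutation matrix, gives this directly, because conjugation by $\Pi$ turns lower unitriangular matrices into upper ones and the entries depend triangularly on the crossing variables.
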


\begin{proof}
First, let us perform a Legendrian isotopy to $\La_\beta$ so it becomes the $(-1)$-closure of the positive braid word $\Delta^2\beta$, as in \Cref{fig:FrontsBeta2}, cf.~\cite[Section 2.2]{CasalsNg}. By the stable-tame invariance of the Legendrian dg algebra, it suffices to show that the Legendrian dg algebra of the $(-1)$-closure of the positive braid word $\Delta^2\beta$ is quasi-isomorphic to $\cG(\beta)$. By \cite[Prop.~5.2]{CasalsNg}, the Legendrian dg algebra of such $(-1)$-closure is given by the free algebra with generators as in \Cref{tab:generators2}.

\begin{table}[htpb]
    \centering
    \begin{tabular}{lcc}
        \toprule
        \textbf{Generators} & \textbf{Degree} & \textbf{Index range} \\
        \midrule
        $z_{i}$ & 0 & $i\in [1,m]$ \\
        $t_i^{\pm1}$ & 0 & $i\in [1,n]$ \\
        $w_i$ & 0 & $i\in[1,n(n-1)]$ \\
        $c_{ij}$ & $1$ & $i,j\in [1,n]$ \\
        \bottomrule
    \end{tabular}
    \caption{List of generators for the Legendrian dg algebra of the $(-1)$-closure of the positive braid word $\Delta^2\beta$. The generators $w_i$ correspond to the crossings of $\Delta^2$, while the generators $z_i$ correspond to the crossings of $\beta$. The degree $1$ generators $c_{ij}$ are associated to the crossings in the pigtail closure of $\Delta^2\beta$, cf.~\cite[Section 2.2]{CasalsNg}.}
    \label{tab:generators2}
\end{table}
\noindent Its differential reads
\begin{equation}\label{eq:diff_dga_proof1}
\dd C=\mbox{Id}_n+P_{\Delta^2\beta}T
\end{equation}
where $C:=(c_{ij})$ records the degree 1 generators, and $T:=\diag(t_1,\ldots,t_n)$ the basepoint variables. By \cite[Lemma 2.3]{CGGS24}, there exists matrices $L=(\ell_{ij})$, $U=(u_{ij})$, which are lower and upper unitriangular respectively, such that $P_{\Delta^2}=LU$. Therefore these matrices also satisfy $P_{\Delta^2\beta}=LUP_{\beta}$ and the differential in \eqref{eq:diff_dga_proof1} reads as
\begin{equation}\label{eq:diff_dga_proof2}
\dd C=\mbox{Id}_n+LUP_{\beta}T
\end{equation}
Let us consider the matrix $B:=TCT^{-1}P_\beta^{-1}$. Then \eqref{eq:diff_dga_proof2} gives
\begin{align*}
 \partial B
 &=T(\partial C)T^{-1}P_\beta^{-1}\\
 &=T(\mbox{Id}_n+LUP_\beta T)T^{-1}P_\beta^{-1}\\
 &=P_\beta^{-1}+TLU,
 \end{align*}
which is the differential \eqref{eq:gauss-diff} for $\cG(\beta)$ after identifying $D=T$. In summary:
\begin{enumerate}[label=$(\roman*)$]
    \item We can express the generators $b_{ij}$ of $\cG(\beta)$ in terms of the generators in \Cref{tab:generators2} via the identity $B=TCT^{-1}P_\beta^{-1}$. The generators $z_i$ remain identical for all $i\in[1,m]$.

    \item Similarly, we can express the generators $\ell_{ij},u_{ij}$ of $\cG(\beta)$ in terms of the generators $w_i$ in \Cref{tab:generators2} via $P_{\Delta^2}=LU$, and declare $d_i=t_i$ for all $i\in[1,n]$.

    \item These identifications above, along with the Legendrian isotopy, define a stable-tame equivalence of graded algebras. The computation $\dd B=P_\beta^{-1}+TLU$ above shows that it induces a quasi-isomorphism of dg algebras.
\end{enumerate}
Thus the Legendrian dg algebra of the $(-1)$-closure of the positive braid word $\Delta^2\beta$ is quasi-isomorphic, as a dg algebra, to the dg algebra $\cG(\beta)$. Therefore our original Legendrian dg algebra $\SA(\La_\beta,\ST)$ is quasi-isomorphic to $\cG(\beta)$, as required.
\end{proof}

\noindent From now on, we fix a field $\kk$ and consider algebras over $\kk$ unless otherwise stated. To ease notation, we will often still denote by $\cG(\beta)$ the base change of the $\ZZ$-algebra from \Cref{def:gauss-dga} to such a field $\kk$.

%%%%%%%%%%%%%%%%%%%%%%%%%%%%%%%%%%%%%%%%%%%
%%%%%%%%%%%%%%%%%%%%%%%%%%%%%%%%%%%%%%%%%%%

\subsection{ Cohn localization and non-commutative $LU$-decompositions}\label{ssec:Cohn_loc_Gauss_dec}
Based on \Cref{prop:dga-change}, we shift our focus to studying the dg algebra $\cG(\beta)$ in \Cref{def:gauss-dga}. An important observation now is that \eqref{eq:gauss-diff} can be interpreted as $P_\beta^{-1}(z_1,\ldots,z_m)$ admitting an LU-decomposition up to homotopy, i.e.~in the derived sense. Thus, as a first underived step, we now address the question of characterizing matrices with entries in $\kk\langle z_1,\ldots,z_m\rangle$ that admit such an LU-decomposition:

\begin{lemma}\label{lem:LU_decomposition}
Let \(S\) be a unital, not necessarily commutative, ring, and let
\(Y\in\Mat_n(S)\). Then the following are equivalent:
\begin{enumerate}[label=\textup{(\roman*)},leftmargin=2.2em]
\item Every leading principal submatrix of $Y$ is invertible.
\item There is a factorization $Y=D_YL_YU_Y$, where \(D_Y\) is invertible diagonal, \(L_Y\) is lower
unitriangular, and \(U_Y\) is upper unitriangular.
\end{enumerate}
In addition, if such factorization exists, it must be unique.
\end{lemma}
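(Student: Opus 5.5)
The plan is to argue by induction on $n$, using block decompositions. Throughout, write $Y_k$ for the leading principal $k\times k$ submatrix of $Y$. The key observation is multiplicative. If $D$ is diagonal, $L$ is lower unitriangular and $U$ is upper unitriangular, then the leading $k\times k$ block of $DLU$ equals $D_kL_kU_k$, where $D_k,L_k,U_k$ are the corresponding leading blocks. This holds because $L$ has zero upper-right block and $U$ has zero lower-left block, so the $(1,1)$-block of a product of lower-triangular-type and upper-triangular-type block matrices only involves the $(1,1)$-blocks.

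\textbf{(ii)$\Rightarrow$(i).} By the observation, $Y_k=D_kL_kU_k$. The factor $D_k$ is invertible since each $d_i$ is a unit. The factors $L_k$ and $U_k$ are unitriangular over an arbitrary ring, and such matrices are invertible: their inverses are given by finite Neumann series $\sum_{j<k}(-N)^j$ with $N$ nilpotent. Hence $Y_k$ is invertible, and this needs no commutativity.

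\textbf{(i)$\Rightarrow$(ii).} Induct on $n$; the case $n=1$ is $Y=(y_{11})$ with $y_{11}$ a unit. For the inductive step, write
\[
Y=\begin{pmatrix} Y_{n-1} & b\\ c & e\end{pmatrix}.
\]
By induction, $Y_{n-1}=D'L'U'$, since its leading principal submatrices are among those of $Y$. We seek a factorization of the form
\[
D=\begin{pmatrix} D' & 0\\ 0 & d\end{pmatrix},\qquad L=\begin{pmatrix} L' & 0\\ \ell & 1\end{pmatrix},\qquad U=\begin{pmatrix} U' & u\\ 0 & 1\end{pmatrix}.
\]
Expanding $DLU$ gives three conditions. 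First, $D'L'u=b$, so $u=(L')^{-1}D'^{-1}b$. Second, $d\,\ell U'=c$, so $\ell=d^{-1}c\,U'^{-1}$. Third, $d(\ell u+1)=e$, which after substitution reads $c\,Y_{n-1}^{-1}b+d=e$. Hence we must take $d=e-cY_{n-1}^{-1}b$, the Schur complement, and this forces the order in which we solve: first determine $d$, then $\ell$. It remains to check that $d$ is a unit. The block identity
\[
Y=\begin{pmatrix}1&0\\ cY_{n-1}^{-1}&1\end{pmatrix}\begin{pmatrix}Y_{n-1}&0\\0&d\end{pmatrix}\begin{pmatrix}1&Y_{n-1}^{-1}b\\0&1\end{pmatrix}
\]
holds over any ring, and the outer factors are invertible. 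Since $Y$ and $Y_{n-1}$ are invertible, $\operatorname{diag}(Y_{n-1},d)$ is invertible, and its inverse is forced to be block diagonal with $(2,2)$ entry a two-sided inverse of $d$. This noncommutative invertibility of the Schur complement is the one step that needs care: no determinants are available, so everything must go through explicit block inverses, keeping left and right multiplication straight since $D$ sits on the left.

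\textbf{Uniqueness.} Suppose $DLU=D'L'U'$. Then $L'^{-1}D'^{-1}DL=U'U^{-1}$. The left side is lower triangular, since $D'^{-1}D$ is diagonal and conjugation by lower unitriangular matrices preserves lower triangularity. The right side is upper unitriangular. Both sides are therefore diagonal with $1$'s on the diagonal, hence the identity, which gives $U=U'$. Then $D'^{-1}DL=L'$. Comparing diagonal entries gives $D=D'$, and then $L=L'$.
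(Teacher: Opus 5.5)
Your proof is correct. For the equivalence (i)$\Leftrightarrow$(ii) it is essentially the paper's argument:
\begin{itemize}
\item the same observation that taking leading blocks commutes with the product $DLU$;
\item the same induction on $n$ with the Schur complement $d=e-cY_{n-1}^{-1}b$;
\item the same block identity showing $\diag(Y_{n-1},d)$ is invertible;
\item the same formulas $u=(D'L')^{-1}b$ and $\ell=d^{-1}c(U')^{-1}$.
\end{itemize}

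You differ from the paper in the uniqueness argument. The paper inducts a second time, reading off $u$, then $d_n=x-cA^{-1}b$, then $\ell$ from the blocks of $Y$. You instead use the global identity $L'^{-1}D'^{-1}DL=U'U^{-1}$: this matrix is both lower triangular and upper unitriangular, hence the identity. Your route is shorter and does not depend on the existence construction. The paper's route exhibits the factors as forced by explicit formulas in the entries of $Y$ and the inverses of its leading blocks. You in fact obtain this as well, since your existence step shows each new entry is forced once the $(n-1)$-block factors are fixed, so uniqueness could also have been read off there.

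Three small wording points:
\begin{itemize}
\item $L'^{-1}(D'^{-1}D)L$ is not a conjugation, since the second factorization's $L'$ need not equal $L$. What you actually use is that products of lower triangular matrices are lower triangular over any ring, which is true.
\item Invertibility of $d$ follows directly from the $(2,2)$ entries of $\diag(Y_{n-1},d)M=I=M\diag(Y_{n-1},d)$, which give $ds=1=sd$. You do not need to argue that $M$ is block diagonal.
\item Reusing $D',L',U'$ for the second factorization in the uniqueness part clashes with your inductive notation. Renaming them would avoid confusion.
\end{itemize}
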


\begin{proof}
To ease notation we denote by $ Y^{[r]}$ the $r\times r$ leading principal submatrix of a matrix $Y$. Let us first show $(ii)\Longrightarrow(i)$, so we assume that the factorization \(Y=D_YL_YU_Y\) exists.  Since the factors are respectively
diagonal, lower triangular, and upper triangular, taking a leading
principal block commutes with their product. There we obtain
\[
 Y^{[r]}=D_Y^{[r]}L_Y^{[r]}U_Y^{[r]}.
\]
Every factor on the right is invertible, so \(Y^{[r]}\) is invertible, and $(ii)\Longrightarrow(i)$ is proven.\\

For $(i)\Longrightarrow(ii)$, we suppose that all the leading principal submatrices \(Y^{[r]}\) are invertible.  We argue by
induction on the size \(n\in\mathbb{N}\).  The case \(n=1\) holds tautologically. Let us write
\[
 Y=\begin{pmatrix}A&b\\ c&x\end{pmatrix},
\]
where \(A=Y^{[n-1]}\).  By the induction hypothesis, there is a unique factorization
\[
 A=D'L'U'.
\]
By setting $s:=x-cA^{-1}b$, note that the matrices
\[
 P:=\begin{pmatrix}I&0\\-cA^{-1}&1\end{pmatrix},
 \qquad
 Q:=\begin{pmatrix}I&-A^{-1}b\\0&1\end{pmatrix}
\]
are invertible and satisfy
\[
 PYQ=\begin{pmatrix}A&0\\0&s\end{pmatrix}.
\]
Since \(P,Y,Q\) are invertible, the block-diagonal matrix
\(\diag(A,s)\) is invertible.  Multiplying by
\(\diag(A^{-1},1)\) shows that \(\diag(\mbox{Id},s)\) is invertible. The
bottom-right equations for a two-sided inverse then give a two-sided
inverse for \(s\), and thus \(s\) is invertible.
Set
\[
 d_n:=s,
 \qquad
 u:=(D'L')^{-1}b,
 \qquad
 \ell:=s^{-1}c(U')^{-1},
\]
and define the following extensions of $D',L'$ and $U'$:
\[
 D_Y:=\begin{pmatrix}D'&0\\0&s\end{pmatrix},
 \quad
 L_Y:=\begin{pmatrix}L'&0\\\ell&1\end{pmatrix},
 \quad
 U_Y:=\begin{pmatrix}U'&u\\0&1\end{pmatrix}.
\]
Then the upper-left, upper-right, and lower-left blocks of
\(D_YL_YU_Y\) are respectively \(A,b,c\).  Its lower-right entry is
\begin{align*}
 s(1+\ell u)
 &=s+c(U')^{-1}(D'L')^{-1}b\\
 &=s+cA^{-1}b=x,
\end{align*}
where
\((U')^{-1}(D'L')^{-1}=A^{-1}\).  Thus \(Y=D_YL_YU_Y\), which proves  $(i)\Longrightarrow(ii)$.\\

\noindent For uniqueness, we suppose that \(Y=D_YL_YU_Y\) and proceed by induction, as above.  The induction hypothesis is that leading
\((n-1)\times(n-1)\) block determines \(D',L',U'\).  The
upper-right block then determines
\[
 u=(D'L')^{-1}b.
\]
Using the lower-left and lower-right submatrices we obtain
\[
 x=d_n+c(U')^{-1}u,
\]
so
\[
 d_n=x-c(U')^{-1}u=x-cA^{-1}b=s.
\]
and we also have $\ell=d_n^{-1}c(U')^{-1}$. Thus every factor is uniquely determined.
\end{proof}

\Cref{lem:LU_decomposition} allows us to describe the zeroth homology group $H_0(\cG(\beta))$ of the dg algebra $\cG(\beta)$ defined in \Cref{def:gauss-dga}. Indeed, by \Cref{lem:LU_decomposition}, $H_0(\cG(\beta))$ is the (underived) Cohn localization of $\kk\langle z_1,\ldots,z_m\rangle$ at the leading principal submatrices of $P_\beta^{-1}(z_1,\ldots,z_m)$. More precisely, we will now present an explicit model to describe such Cohn localization, which will be rigorously justified via \Cref{lem:LU_decomposition}, as follows.

\begin{definition}\label{def:FM_and_FG} Let us consider the following two algebras and a morphism between them:

\begin{enumerate}[label=$(\roman*)$]
    \item $\FM:=\kk\langle x_{ij}\rangle$ is the unital associative free algebra on $n^2$ generators $x_{ij}$, $i,j\in[1,n]$.

    \item  $\FG:=
 \kk\langle d_i^{\pm1},\ell_{ij},u_{ij}: d_id_i^{-1}=1=d_i^{-1}d_i\rangle$, is the unital associative free algebra, non-commutative Laurent in the $d_i$ generators, and where the indices range in $j<i$ for $\ell_{ij}$, $i<j$ for $u_{ij}$, and in general $i,j\in[1,n]$.

 \item The algebra morphism $\gamma:\FM\lr\FG$ is defined by
 $$\gamma(x_{ij})=(i,j)\mbox{-entry of }DLU,$$
 where, as in \Cref{ssec:dga_rainbow} above, we denote
 $$ D:=\diag(d_1,\ldots,d_n),\quad
 L:=(\ell_{ij})\text{ lower unitriangular},\quad
 U:=(u_{ij})\text{ upper unitriangular}.$$
\end{enumerate}
\hfill$\Box$
\end{definition}

Endowed with \Cref{def:FM_and_FG}, \Cref{lem:LU_decomposition} now implies the following result:

\begin{corollary}
\label{cor:gamma-localization}
The map \(\gamma:\FM\lr\FG\) is the (underived) Cohn localization of
\(\FM\) at the collection of the leading principal submatrices of $X=(x_{ij})$. That is, for every unital ring \(S\), composition with \(\gamma\)
identifies ring maps \(\FG\to S\) with ring maps \(f:\FM\to S\)
for which every image of a leading principal submatrix of $X$ under \(f\) is invertible.
\end{corollary}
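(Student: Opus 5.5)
We verify the universal property directly, using \Cref{lem:LU_decomposition} in both directions. Write $X^{[r]}$ for the leading principal $r\times r$ submatrix of $X=(x_{ij})$. For a ring map $g:\FG\to S$, the ring map $f:=g\circ\gamma$ sends $X$ to $g(D)g(L)g(U)$. Here $g(D)$ is invertible diagonal, because each $d_i$ is invertible in $\FG$, and $g(L)$, $g(U)$ are lower and upper unitriangular. By the implication $(ii)\Rightarrow(i)$ of \Cref{lem:LU_decomposition}, applied over $S$, every $f(X^{[r]})$ is invertible. Thus composition with $\gamma$ lands in the set of ring maps $f:\FM\to S$ inverting all leading principal submatrices. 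It remains to show this assignment $g\mapsto g\circ\gamma$ is a bijection onto that set.

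\textbf{Surjectivity.} Let $f:\FM\to S$ be such that every $f(X)^{[r]}=f(X^{[r]})$ is invertible. By $(i)\Rightarrow(ii)$ of \Cref{lem:LU_decomposition}, applied to $Y=f(X)\in\Mat_n(S)$, there is a factorization $f(X)=D_YL_YU_Y$. Define $g:\FG\to S$ on generators by
\begin{itemize}
\item $d_i\mapsto (D_Y)_{ii}$ and $d_i^{-1}\mapsto (D_Y)_{ii}^{-1}$;
\item $\ell_{ij}\mapsto (L_Y)_{ij}$ for $j<i$;
\item $u_{ij}\mapsto (U_Y)_{ij}$ for $i<j$.
\end{itemize}
This is well defined because $\FG$ is free apart from the relations $d_id_i^{-1}=1=d_i^{-1}d_i$, and these hold in $S$. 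Then $g(\gamma(x_{ij}))$ is the $(i,j)$-entry of $D_YL_YU_Y=f(X)$, which equals $f(x_{ij})$. Since $\FM$ is free on the $x_{ij}$, we get $g\circ\gamma=f$.

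\textbf{Injectivity.} Suppose $g,g'$ both satisfy $g\circ\gamma=g'\circ\gamma=f$. Then $g(D)g(L)g(U)$ and $g'(D)g'(L)g'(U)$ are two factorizations of the same matrix $f(X)$ of the form in \Cref{lem:LU_decomposition}$(ii)$. By the uniqueness statement of that lemma, the two factorizations agree factor by factor. So $g$ and $g'$ agree on every $d_i$, $\ell_{ij}$ and $u_{ij}$, and hence on each $d_i^{-1}$, since inverses are unique. Therefore $g=g'$.

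The only point needing care is that \Cref{lem:LU_decomposition} holds over an arbitrary, possibly non-commutative, unital ring $S$, including its uniqueness clause. This is exactly how the lemma was stated and proved, using two-sided inverses throughout, so there is no genuine obstacle.
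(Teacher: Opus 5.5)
Your proof is correct and takes the same approach as the paper. Both use $(ii)\Rightarrow(i)$ of the LU-decomposition lemma to see that $g\circ\gamma$ inverts the leading principal submatrices, $(i)\Rightarrow(ii)$ to build the extension, and the lemma's uniqueness clause to make that extension unique. You simply spell out in more detail the well-definedness of $g$ on the generators of $\FG$ and the injectivity step, which the paper compresses into a few lines.
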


\begin{proof}
Indeed, a map \(\FG\to S\) is exactly the choice of an invertible diagonal
matrix \(D_S\), a lower unitriangular matrix \(L_S\), and an upper
unitriangular matrix \(U_S\).  Its restriction to \(\FM\) sends \(X\)
to \(D_SL_SU_S\), whose leading blocks are invertible by \Cref{lem:LU_decomposition}.
Conversely, if the leading blocks of \(f(X)\) are invertible, \Cref{lem:LU_decomposition}
provides a unique factorization \(f(X)=D_SL_SU_S\), hence a unique
extension \(\FG\to S\).  This is precisely the universal property
of Cohn localization, cf.~\cite[Chapter 7]{CohnBook}.
\end{proof}

In order to explicitly describe derived Cohn localization, as used in \Cref{sec:main_proof}, it is useful to functorially express the underived Cohn localization as a pushout, using the algebras and map from \Cref{def:FM_and_FG}:

\begin{corollary}
\label{cor:strict-base-change}
Let \(A\) be a dg algebra under \(\FM\), via $g:\FM\lr A$, and suppose the entries $g(x_{ij})$ are degree-zero cycles in $A$.  Then the (underived) pushout $ \FG*_{\FM}A$ is the (underived) Cohn localization of \(A\) at the images of the leading principal submatrices of $(x_{ij})$ under the map $g$.
\end{corollary}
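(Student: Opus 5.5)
The plan is to prove the statement by checking universal properties directly. We treat the pushout $\FG*_{\FM}A$ in the category of (graded, or dg) unital algebras, and compare it with the universal property of the Cohn localization of $A$ at the matrices $g(X^{[r]})$, $r\in[1,n]$. Here $X=(x_{ij})$ and $X^{[r]}$ denotes its leading principal $r\times r$ submatrix. Both objects are defined by universal properties, so it suffices to show that they corepresent the same functor on algebras $S$.

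First, for any algebra $S$, a map $\FG*_{\FM}A\lr S$ is by definition a pair $(\psi:\FG\to S,\ \phi:A\to S)$ with $\psi\circ\gamma=\phi\circ g$. Next we apply \Cref{cor:gamma-localization} to the ring $S$, or to its degree-zero part $S_0$ in the graded setting. Maps $\psi:\FG\to S$ correspond bijectively, via $\psi\mapsto\psi\circ\gamma$, to maps $f:\FM\to S$ such that every $f(X^{[r]})$ is invertible. The compatibility condition forces $f=\phi\circ g$.

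It follows that pairs $(\psi,\phi)$ are the same data as maps $\phi:A\to S$ such that $\phi(g(X^{[r]}))$ is invertible for all $r$. This is exactly the universal property of the Cohn localization of $A$ at $\{g(X^{[r]})\}_r$, cf.~\cite[Chapter 7]{CohnBook}. By Yoneda the two objects are canonically isomorphic under $A$.

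The one point needing care is the graded/dg setting. The generators of $\FG$ must be placed in degree $0$, since the $d_i,\ell_{ij},u_{ij}$ have degree $0$. The hypothesis that the $g(x_{ij})$ are degree-zero cycles is what makes $g$ a dg map from $\FM$, concentrated in degree $0$ with zero differential. Similarly, a graded map $\FG\to S$ lands in $S_0$, so \Cref{cor:gamma-localization} applies to the ring $S_0$. We then check that the differential on the pushout, set to zero on $\FG$ and equal to $\dd_A$ on $A$, is well defined. The only relations imposed are $\gamma(x_{ij})=g(x_{ij})$, and both sides are cycles, so the ideal they generate is closed under the differential.

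On the localization side, the Cohn localization of a dg algebra at degree-zero cycle matrices inherits a unique differential: the formal inverse entries have zero differential by the Leibniz rule, since $\dd(M^{-1})=-M^{-1}(\dd M)M^{-1}=0$. I expect this compatibility of differentials to be the main, though mild, obstacle. Once it is in place, the Yoneda argument above goes through verbatim in the category of dg algebras.
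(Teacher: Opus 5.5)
Your proposal is correct and is essentially the paper's argument: both identify maps $\FG*_{\FM}A\to S$ with maps $\phi:A\to S$ together with an extension of $\phi\circ g$ across $\gamma$. Both then use \Cref{cor:gamma-localization} to see that such an extension exists, uniquely, exactly when every $\phi(g(X^{[r]}))$ is invertible, which is the universal property of Cohn localization. Your extra bookkeeping for the dg structure supplements the paper's terse proof, with one small correction: a dg map out of $\FG$ lands in the degree-zero cycles $Z_0(S)$ rather than in $S_0$, so \Cref{cor:gamma-localization} should be applied to $Z_0(S)$. This is harmless because, as you observe, the inverse of a matrix of degree-zero cycles is again a matrix of degree-zero cycles.
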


\begin{proof}
For any target dg algebra \(S\), the universal property of the pushout implies that a dg algebra map \(\FG*_{\FM}A\lr S\) is a dg algebra map \(A\lr S\)
together with an extension of its restriction \(\FM	\lr S\) across
\(\FG\).  By \Cref{cor:gamma-localization}, such an extension
exists (and it is unique) precisely when all the indicated leading principal submatrices
are strictly invertible in \(\Mat(S)\), which is the universal property of (underived) Cohn
localization.
\end{proof}
\color{black}
In \Cref{sec:main_proof} we will use a derived version of \Cref{cor:strict-base-change}, modeling derived Cohn localization via the corresponding derived pushout.
%%%%%%%%%%%%%%%%%%%%%%%%%%%%%%%%%%%%%%%%%%%
%%%%%%%%%%%%%%%%%%%%%%%%%%%%%%%%%%%%%%%%%%%
%%%%%%%%%%%%%%%%%%%%%%%%%%%%%%%%%%%%%%%%%%%
\section{Proof of main result}\label{sec:main_proof}

We structure the proof of the Main Theorem by building the following sequence of quasi-isomorphisms:

\begin{equation}\label{eq:sequence_qiso}
\SA(\La_\beta,\ST)\stackrel{(a)}{\simeq}\cG(\beta)\stackrel{(b)}{\simeq}L^{\mathrm{dga}}_{\Sigma_\beta}(\kk\langle z_1,\ldots,z_m\rangle)\stackrel{(c)}{\simeq} \kk\langle z_1,\ldots,z_m\rangle_{\Sigma_\beta}.
\end{equation}

Let us first present the ingredients featuring in \eqref{eq:sequence_qiso}:

\begin{enumerate}[label=(\alph*)]
    \item $\SA(\La_\beta,\ST)$ is the Legendrian dg algebra of the rainbow closure $\Lambda_\beta$ of the positive braid word $\beta$ with one basepoint per strand. The dg algebra $\cG(\beta)$ is defined in \Cref{def:gauss-dga}. We established the quasi-isomorphism $(a)$ in \eqref{eq:sequence_qiso} in \Cref{prop:dga-change} above.

    \item $\Sigma_\beta$ is the finite collection of matrices over $\kk\langle z_1,\ldots,z_m\rangle$ given by the leading principal submatrices of the path matrix $P_\beta^{-1}$, and $L^{\mathrm{dga}}_{\Sigma_\beta}(\kk\langle z_1,\ldots,z_m\rangle)$ the derived Cohn localization of $\kk\langle z_1,\ldots,z_m\rangle$ along $\Sigma_\beta$. See ~\cite[Definition 4.35]{BCL} for details on the derived Cohn localization. The quasi-isomorphism $(b)$ in \eqref{eq:sequence_qiso} will be proven in \Cref{prop:derived-model} below.

    \item $\kk\langle z_1,\ldots,z_m\rangle_{\Sigma_\beta}$ denotes the underived Cohn localization of $\kk\langle z_1,\ldots,z_m\rangle$ along $\Sigma_\beta$, as constructed in \cite[Chapter 7]{CohnBook}. The quasi-isomorphism $(c)$ in \eqref{eq:sequence_qiso} will be proven in \Cref{prop:ordinary-equals-derived-localization} below.
\end{enumerate}

Since the underived Cohn localization $\kk\langle z_1,\ldots,z_m\rangle_{\Sigma_\beta}$ is an underived algebra, i.e.~ a dg algebra concentrated in degree 0 and trivial differential, its higher homology vanishes. Thus once we establish the quasi-isomorphisms in \eqref{eq:sequence_qiso}, it will follow that the same holds for $\SA(\La_\beta,\ST)$, as required. The focus of the rest of this subsection is on proving \Cref{prop:derived-model} and \Cref{prop:ordinary-equals-derived-localization}, which respectively establish the quasi-isomorphisms $(b)$ and $(c)$ in \eqref{eq:sequence_qiso}.

%%%%%%%%%%%%%%%%%%%%%%%%%%%%%%%%%%%%%%%%%%%
%%%%%%%%%%%%%%%%%%%%%%%%%%%%%%%%%%%%%%%%%%%

%%%%%%%%%%%%%%%%%%%%%%%%%%%%%%%%%%%%%%%%%%%
%%%%%%%%%%%%%%%%%%%%%%%%%%%%%%%%%%%%%%%%%%%
\subsection{ Legendrian dg algebra as a derived Cohn localization.}\label{ssec:Legendrian_DGA_derived_Cohn} The goal of this subsection is to show that the Legendrian dg algebra of a rainbow closure $\La_\beta$ is quasi-isomorphic to a derived Cohn localization of $\kk\langle z_1,\ldots,z_m\rangle$ along certain submatrices of the inverse path matrix $P_\beta^{-1}$. Specifically, we will now prove the following result:

\begin{theorem}
\label{prop:derived-model}
Let $\beta\in\mbox{Br}_n^+$ be a positive braid word and $\Sigma_\beta$ the collection of leading principal submatrices of the inverse path matrix $P_\beta^{-1}$. Then there exists a quasi-isomorphism of dg algebras:
\begin{equation}\label{eq:LegendrianDGA_derived_Cohn}
 \cG(\beta)\simeq L^{\mathrm{dga}}_{\Sigma_\beta}(\kk\langle z_1,\ldots,z_m\rangle)   
\end{equation}
\end{theorem}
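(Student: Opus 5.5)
The plan is to realize both sides of \eqref{eq:LegendrianDGA_derived_Cohn} as the same homotopy pushout. By \cite[Definition 4.35]{BCL}, the derived Cohn localization of an algebra $R$ along a set of matrices can be computed as a derived pushout of $R$ with a universal localization that is cofibrant. Here the relevant universal localization is the derived localization of $\FM$ at the leading principal submatrices of $X=(x_{ij})$, and we pushout along the map
$$g:\FM\lr \kk\langle z_1,\ldots,z_m\rangle,\qquad x_{ij}\mapsto p_{ij},$$
with $p_{ij}$ the entries of $P_\beta^{-1}$. I will argue in three steps.

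\textbf{Step 1 (the universal case).} Show that $\gamma:\FM\lr\FG$ from \Cref{def:FM_and_FG} is already a model for the derived localization $L^{\mathrm{dga}}_{\Sigma}(\FM)$, where $\Sigma$ is the set of leading principal submatrices of $X$. Two facts are needed:
\begin{itemize}
\item \Cref{cor:gamma-localization} identifies $\FG$ with the underived Cohn localization $\FM_\Sigma$.
\item $\FM$ is free, hence hereditary. For localizations of hereditary rings (Bergman--Dicks, and the statement in \cite{BCL} that for hereditary rings the universal localization is stably flat, i.e.\ $\Tor^{\FM}_i(\FM_\Sigma,\FM_\Sigma)=0$ for $i>0$), the derived localization has no higher homology.
\end{itemize}
It follows that $L^{\mathrm{dga}}_\Sigma(\FM)\simeq\FG$.

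Alternatively, and more self-contained, exhibit $\FG$ directly as obtained from $\FM$ by a sequence of derived localizations at single elements. Inductively on $r$, the Schur complements $s_r$ are invertible exactly when $X^{[r]}$ is, and the free generators $\ell,u$ correspond to the entries of $X$ via the explicit formulas in the proof of \Cref{lem:LU_decomposition}. Localization of a free algebra at a single element, or more generally adjoining a free generator together with its inverse, is known to be derived-trivial.

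\textbf{Step 2 (cofibrant replacement and base change).} Replace $\gamma$ by a cofibration $\FM\hookrightarrow\widetilde\FG$. The natural choice is
$$\widetilde\FG=\FM\langle d_i^{\pm1},\ell_{ij},u_{ij},b_{ij}\rangle,\qquad \dd b_{ij}=x_{ij}-(DLU)_{ij}.$$
We must check that $\widetilde\FG\to\FG$, sending $x_{ij}\mapsto (DLU)_{ij}$ and $b_{ij}\mapsto 0$, is a quasi-isomorphism. This holds because $\widetilde\FG$ is a semifree extension that kills the free generators $x_{ij}$ against the $b_{ij}$: a change of variables $x'_{ij}=x_{ij}-(DLU)_{ij}$ makes $\widetilde\FG\cong\FG\langle x'_{ij},b_{ij}\mid \dd b_{ij}=x'_{ij}\rangle$, which is an acyclic extension of $\FG$.

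Then the derived pushout is the strict pushout
$$\widetilde\FG *_{\FM}\kk\langle z_1,\ldots,z_m\rangle=\kk\langle z_k,d_i^{\pm1},\ell_{ij},u_{ij},b_{ij}\rangle,\qquad \dd b_{ij}=p_{ij}-(DLU)_{ij}.$$
Up to the sign automorphism $b\mapsto -b$, and the sign convention in \eqref{eq:gauss-diff} (which can be absorbed by $D\mapsto -D$, since $-P^{-1}_\beta$ also has invertible leading minors exactly when $P^{-1}_\beta$ does), this is precisely $\cG(\beta)$.

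\textbf{Step 3 (compatibility with \cite{BCL}).} Finally, invoke that derived localization commutes with derived base change, $L^{\mathrm{dga}}_{g(\Sigma)}(A)\simeq L^{\mathrm{dga}}_{\Sigma}(\FM)*^{\mathbb L}_{\FM}A$, as in \cite{BCL}. Here $g(\Sigma)=\Sigma_\beta$. Combined with Steps 1 and 2, this gives the claimed quasi-isomorphism.

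The main obstacle is Step 1, namely ensuring that $\FG$ is the \emph{derived}, not merely the underived, localization of $\FM$. I would first try the hereditary/stably-flat result; failing that, I would use the iterated single-element localization argument, tracking carefully that each stage is again a free algebra with some Laurent variables adjoined.
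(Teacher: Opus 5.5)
Your argument has the same skeleton as the paper's proof. You identify $L^{\mathrm{dga}}_{\Sigma_\beta}(\kk\langle z_1,\ldots,z_m\rangle)$ with the derived pushout $\FG*^{\mathbf L}_{\FM}\kk\langle z_1,\ldots,z_m\rangle$, adjoin degree-one generators $b_{ij}$ killing $x_{ij}$ minus its image, and read off $\cG(\beta)$ from the strict pushout. Your sign bookkeeping via $D\mapsto -D$ is fine.

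There are two genuine differences from the paper:
\begin{enumerate}
\item You resolve the $\gamma$-leg. The paper instead resolves the other leg, by the relative semi-free extension $Q_\beta\to\kk\langle z_1,\ldots,z_m\rangle$ in $\FM\!\downarrow\!\DGA$ (\Cref{lem:relative-resolution}), and pushes out along the left Quillen functor $\FG*_{\FM}(-)$. Both strict pushouts are isomorphic to $\cG(\beta)$.
\item Your Step~1 makes explicit an input the paper only takes from its citation of \Cref{cor:strict-base-change} and \cite{BCL}. For $\FG*^{\mathbf L}_{\FM}(-)$ to compute derived localization, $\FG$ must be the \emph{derived} localization of $\FM$ at the leading principal submatrices of $X$, and \Cref{cor:strict-base-change} is only an underived statement. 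Your hereditary/stable-flatness route settles this: it is exactly \Cref{prop:ordinary-equals-derived-localization} applied to the free algebra $\FM=\kk\langle x_{ij}\rangle$, combined with \Cref{cor:gamma-localization}. So the iterated single-element alternative is unnecessary. Together with the formal base-change property in Step~3, your write-up is more explicit than the paper's on this point.
\end{enumerate}

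One claim in Step~2 is false: $\FM\to\widetilde\FG$ is \emph{not} a cofibration, because it adjoins the Laurent generators $d_i^{\pm1}$. In fact $\widetilde\FG$ is not even cofibrant. A cofibrant dg algebra is a retract of a semi-free one, whose underlying graded algebra is a tensor algebra and so has only scalar units. But $d_1$ is a non-scalar unit in $\widetilde\FG$. Since $\FM\to\kk\langle z_1,\ldots,z_m\rangle$ is not a cofibration either, the claim that ``the derived pushout is the strict pushout'' does not follow from what you wrote.

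The conclusion is still true, and the fix is easy. Resolve the other leg by $Q_\beta$ as in \Cref{def:cofibrant_replacement}, which is a genuine relative cell extension of $\FM$. Then use that over a field, attaching cells preserves quasi-isomorphisms: free extensions are direct sums of tensor products over $\kk$, and you can filter by the number of $b$'s. Hence $\FG*_{\FM}Q_\beta$ (equivalently $\widetilde\FG*_{\FM}Q_\beta$) computes the derived pushout, and it is again $\cG(\beta)$.

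Alternatively, keep your leg and check directly two facts. First, free product with the augmented algebra $\kk\langle d_i^{\pm1}\rangle$ preserves quasi-isomorphisms between augmented dg algebras over $\kk$. Second, so does attaching the cells $\ell_{ij},u_{ij},b_{ij}$. Together these make $\widetilde\FG*_{\FM}Q_\beta\to\widetilde\FG*_{\FM}\kk\langle z_1,\ldots,z_m\rangle$ a quasi-isomorphism.
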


\noindent Before delving into the proof of \Cref{prop:derived-model}, we must discuss an appropriately explicit model for the derived Cohn localization in the right-hand side of \eqref{eq:LegendrianDGA_derived_Cohn}. Note that the left hand side of \eqref{eq:LegendrianDGA_derived_Cohn} is already explicitly understood, cf.~\Cref{def:gauss-dga}.

\subsubsection{A resolution to compute the derived Cohn localization.}\label{sssec:model_resolution_derived_Cohn} By \Cref{cor:strict-base-change} and \cite[Theorem~4.36]{BCL}, the derived Cohn localization $L^{\mathrm{dga}}_{\Sigma_\beta}(\kk\langle z_1,\ldots,z_m\rangle)$ of $\kk\langle z_1,\ldots,z_m\rangle$ along the principal submatrices in $\Sigma_\beta$ can be computed as the derived pushout
\begin{equation}\label{eq:derived_pushout}
L^{\mathrm{dga}}_{\Sigma_\beta}(\kk\langle z_1,\ldots,z_m\rangle)\simeq \FG*^{\mathbf L}_{\FM}\kk\langle z_1,\ldots,z_m\rangle
\end{equation}
where the map $\FM\lr\kk\langle z_1,\ldots,z_m\rangle$ is given by $\rho(x_{ij})=\rho_{ij}$, and $\rho_{ij}$ is minus the $(i,j)$-entry of the inverse path matrix $P_{\beta}^{-1}$. Here $\FM$ and $\FG$ are as in \Cref{ssec:Cohn_loc_Gauss_dec}, and the map $\FM\lr\FG$ in the derived pushout is $\gamma:\FM\lr\FG$, as in \Cref{def:FM_and_FG}.$(iii)$.\\

In order to compute the derived pushout, it suffices to compute an underived pushout with a cofibration resolution of $\kk\langle z_1,\ldots,z_m\rangle$, as a dg algebra over $\FM$. Indeed, the pushout functor
\[
 \FG*_{\FM}(-):\FM\!\downarrow\!\DGA
 \longrightarrow \FG\!\downarrow\!\DGA
\]
is left adjoint to restriction along \(\gamma:\FM\to\FG\). Note that restriction preserves fibrations and trivial fibrations, and hence the pushout functor is left
Quillen. Therefore its total left-derived functor is computed by taking
any cofibrant replacement in \(\FM\!\downarrow\!\DGA\) and then forming
an (underived) pushout. That is, our task is to find a cofibrant replacement $Q_\beta\lr \kk\langle z_1,\ldots,z_m\rangle$ in \(\FM\!\downarrow\!\DGA\) and study \eqref{eq:derived_pushout} via

\begin{equation}\label{eq:derived_pushout2}
\FG*^{\mathbf L}_{\FM}\kk\langle z_1,\ldots,z_m\rangle\simeq\FG*_{\FM}Q_\beta
\end{equation}

For that, let us introduce the following:

\begin{definition}\label{def:cofibrant_replacement}
The dg algebra $Q_\beta$ is defined to be
\[
 Q_\beta:=\FM*\kk\langle z_1,\ldots,z_m,b_{11},\ldots,b_{ij},\ldots,b_{nn}\rangle,\quad i,j\in[1,n],
\]
where the generators $z_s$ and $b_{ij}$ have degrees $|z_s|=0$ and $|b_{ij}|=1$ and the differential is 
\[
 \partial z_s:=0,
 \qquad
 \partial b_{ij}:=x_{ij}-\rho_{ij}(z_1,\ldots,z_m).
\]
where $\rho_{ij}$ is minus the $(i,j)$-entry of the inverse path matrix $P_{\beta}^{-1}$.\hfill$\Box$
\end{definition}

\noindent We always consider the dg algebra $Q_\beta$ in \Cref{def:cofibrant_replacement} as an object of \(\FM\!\downarrow\!\DGA\), via the canonical map $\FM\lr Q_\beta$ to the first factor times the identity, i.e.~the canonical inclusion of the free-product factor $\FM$. Our next result is to show that $Q_\beta$ is the required cofibrant replacement:

\begin{proposition}
\label{lem:relative-resolution} The map of \(\FM\)-algebras
\[
 \varepsilon:Q_\beta\longrightarrow \kk\langle z_1,\ldots,z_m\rangle,
 \qquad
 z_s\longmapsto z_s,
 \quad x_{ij}\longmapsto \rho_{ij},
 \quad b_{ij}\longmapsto0,
\]
is a cofibrant replacement in
\(\FM\!\downarrow\!\DGA\).
\end{proposition}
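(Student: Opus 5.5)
Two things need to be shown: that $\FM\lr Q_\beta$ is a cofibration in the model structure on $\DGA$ (so $Q_\beta$ is cofibrant in $\FM\!\downarrow\!\DGA$), and that $\varepsilon$ is a quasi-isomorphism. The map $\varepsilon$ is a well-defined map of dg algebras under $\FM$: it restricts to $\rho$ on $\FM$. It is compatible with differentials because $\varepsilon(\dd b_{ij})=\rho_{ij}-\rho_{ij}=0=\dd\varepsilon(b_{ij})$ and $\varepsilon(\dd z_s)=0$.

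\textbf{Cofibration.} Here I would use the standard description of cofibrations in the projective model structure on $\DGA$: retracts of relative cell complexes, built by freely adjoining generators $y$ with $\dd y$ lying in the previously constructed subalgebra. The inclusion $\FM\hookrightarrow Q_\beta$ is such a cell attachment in two stages. First adjoin the $z_s$ freely in degree $0$ with $\dd z_s=0$; this is a pushout of the generating cofibration $\kk\langle\rangle\to\kk\langle S^0\rangle$ (or of $\kk\to\kk\langle z\rangle$ with zero differential, which is cofibrant since it is semi-free). Then adjoin the $b_{ij}$ in degree $1$, whose differentials $x_{ij}-\rho_{ij}$ are degree-zero cycles in $\FM*\kk\langle z\rangle$. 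Hence $\FM\to Q_\beta$ is a semi-free (Sullivan-type) extension and therefore a cofibration.

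\textbf{Quasi-isomorphism.} The key step is a change of generators. Set $x'_{ij}:=x_{ij}-\rho_{ij}(z)$. Since $\rho_{ij}$ only involves the $z$'s, the assignment $x_{ij}\mapsto x'_{ij}+\rho_{ij}$ is an automorphism of the underlying graded free algebra. It exhibits
\[
Q_\beta\cong \kk\langle z_1,\ldots,z_m\rangle * \kk\langle x'_{ij},b_{ij}\rangle,\qquad \dd b_{ij}=x'_{ij},
\]
as dg algebras. Under this isomorphism $\varepsilon$ becomes the identity on the first factor and the augmentation on the second factor, which sends $x'_{ij},b_{ij}\mapsto0$. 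The second factor is a free product of $n^2$ copies of the acyclic dg algebra $\kk\langle x',b\rangle$ with $\dd b=x'$. Each copy is quasi-isomorphic to $\kk$, being the tensor algebra on the contractible complex $\kk b\to\kk x'$. So it remains to show that $\varepsilon$ is a quasi-isomorphism. Equivalently, $Q_\beta$ is a stabilization in the sense of stable-tame equivalence, and stabilization is known to preserve homology.

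\textbf{Main obstacle.} The step needing care is justifying that the free product of $\kk\langle z\rangle$ with an acyclic free dg algebra $T(V)$, where $V$ is contractible, has the same homology as $\kk\langle z\rangle$. Over a field this follows as I outline next. Write $A*T(V)$ as the tensor algebra over $A$ on the bimodule $A\otimes V\otimes A$, giving
\[
A*T(V)=\bigoplus_{k\geq0} A\otimes(V\otimes A)^{\otimes k}.
\]
Each summand with $k\geq1$ is a tensor product of complexes containing the contractible factor $V$, so by the Künneth formula over $\kk$ it is acyclic. Alternatively, I could construct an explicit contracting homotopy $h(x')=b$ and extend it as a derivation-type homotopy, as in the standard proof of invariance of Legendrian dg algebras under stabilization. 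Either way, the inclusion of $\kk\langle z\rangle$ is a quasi-isomorphism with retraction $\varepsilon$, completing the proof.
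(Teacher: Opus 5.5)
Your proposal is correct and takes essentially the same route as the paper: the same semi-free cell-attachment argument for the cofibration, the same triangular change of generators $x_{ij}\mapsto x_{ij}-\rho_{ij}$, and the same decomposition $Q_\beta\cong\bigoplus_{q\geq0}R\otimes(W\otimes R)^{\otimes q}$, in which the $q\geq1$ summands are acyclic and killed by $\varepsilon$. The only cosmetic difference is that the paper proves this acyclicity with the explicit null-homotopy applying $h(v_{ij})=b_{ij}$ to the first $W$-factor, which is the alternative you list alongside the K\"unneth argument.
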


\begin{proof}
First, the extension \(\FM\to Q_\beta\) is relative semi-free.  Indeed,
we start with \(\FM\), first adjoin the degree-zero generators
\(z_1,\ldots,z_m\), and then adjoin the degree-one generators \(b_{ij}\),
whose differentials are expressed in terms of the generators of $\FM$ and \(z_1,\ldots,z_m\). Therefore
\(\FM\to Q_\beta\) is a cofibration, and \(Q_\beta\) is cofibrant in the
under category \(\FM\!\downarrow\!\DGA\).\\

\noindent Second, we must show that \(\varepsilon\) is a quasi-isomorphism. For that, let us perform the following triangular change of degree-zero variables
\[
 v_{ij}:=x_{ij}-\rho_{ij}(z_1,\ldots,z_m).
\]
Since we can write
\[
 x_{ij}=v_{ij}+\rho_{ij}(z_1,\ldots,z_m),
\]
this is indeed an invertible change of free generators. Let us write $R:=\kk\langle z_1,\ldots,z_m\rangle$ to ease notation. In these new variables $v_{ij}$ we can write
\[
 Q_\beta\cong R\langle v_{ij},b_{ij}\rangle,\quad i,j\in[1,n]
 \qquad
 \partial b_{ij}=v_{ij},
 \quad
 \partial v_{ij}=0.
\]
\noindent Consider \(W\) to be the two-term chain complex with basis \(v_{ij}\) in degree zero, basis
\(b_{ij}\) in degree one, with differentials
\[
 \partial b_{ij}=v_{ij},
 \qquad
 \partial v_{ij}=0.
\]
Then, as dg algebras over the free algebra \(R\), we have an isomorphism
\[
 Q_\beta\cong T_R(R\otimes_{\kk}W\otimes_{\kk}R).
\]
Consequently, as a chain complex, we have the equality
\[
 Q_\beta=R\oplus\bigoplus_{q\geq1}K_q,
\]
where we have denoted $K_q:=
 R\otimes W\otimes R\otimes W\otimes\cdots\otimes W\otimes R$ containing exactly \(q\) copies of \(W\). Let us argue that each $K_q$ is contractible for $q\geq1$, from which we will conclude $\varepsilon$ is a quasi-isomorphism. For that, consider the chain homotopy \(h:W\to W[1]\) given by
\[
 h(v_{ij})=b_{ij},
 \qquad
 h(b_{ij})=0,
\]
so that $\partial h+h\partial=\id_W$. Similarly, for \(q\geq1\), consider the chain homotopy \(H_q:K_q\to K_q[1]\) obtained by applying \(h\) to the
first \(W\)-factor, so that
\[
 H_q(r_0\otimes w_1\otimes r_1\otimes\cdots\otimes w_q\otimes r_q)
 =r_0\otimes h(w_1)\otimes r_1\otimes\cdots\otimes w_q\otimes r_q.
\]
We claim that this is a chain null-homotopy for $K_q$:
\begin{equation}\label{eq:null_homotopy}
 \partial H_q+H_q\partial=\id_{K_q}.   
\end{equation}
Indeed, first note that the elements of \(R\) have degree zero. Now, in the tensor-product
differential, the terms in which the differential hits a later
\(W\)-factor cancel between \(\partial H_q\) and \(H_q\partial\), because
\(|h(w_1)|=|w_1|+1\). The remaining terms are
\((\partial h+h\partial)(w_1)\) and thus \eqref{eq:null_homotopy} follows. Thus, since the identity is homotopic to the zero map, every \(K_q\) is contractible for \(q\geq1\). Finally, the summand with
\(q=0\) is \(R\), and \(\varepsilon\) restricts to the identity on this
summand.  Therefore \(\varepsilon\) is a quasi-isomorphism.
\end{proof}

\subsubsection{Proof of \Cref{prop:derived-model}}
First, note that considering the leading principal submatrices of $P_\beta^{-1}$ or their negatives, by adding a minus sign to each of them, results in quasi-isomorphic derived localizations. Let us then apply \Cref{lem:relative-resolution} to obtain
\begin{align*}
 L_{\Sigma_\beta}(\kk\langle z_1,\ldots,z_m\rangle)
 &\simeq \FG*^{\mathbf L}_{\FM}\kk\langle z_1,\ldots,z_m\rangle\\
 &\simeq \FG*_{\FM}Q_\beta.
\end{align*}
As explained above, the last expression is an ordinary (underived) pushout, since \Cref{lem:relative-resolution} implies that \(Q_\beta\to \kk\langle z_1,\ldots,z_m\rangle\) is a
cofibrant replacement in \(\FM\!\downarrow\!\DGA\). Let us now compute this pushout and show it coincides with $\cG(\beta)$.  In \(\FG*_{\FM}Q_\beta\), the image of the generator
\(x_{ij}\) from \(Q_\beta\) is identified with its image from \(\FG\) under the map $\gamma$ in \Cref{def:FM_and_FG}.$(iii)$,
namely the entry \((DLU)_{ij}\).  Therefore the differential of \(b_{ij}\) becomes
\[
 \partial b_{ij}=(DLU)_{ij}-\rho_{ij}.
\]
%After eliminating the redundant symbols \(x_{ij}\), 
Thus the pushout has
precisely the generators and differential of \(\cG(\beta)\). Therefore there
exists the following isomorphism of dg algebras under \(\kk\langle z_1,\ldots,z_m\rangle\):
\[
 \FG*_{\FM}Q_\beta\cong\cG(\beta).
\]
Therefore we can complete the chain of equivalences
\begin{align*}
 L_{\Sigma_\beta}(\kk\langle z_1,\ldots,z_m\rangle)
 &\simeq \FG*^{\mathbf L}_{\FM}\kk\langle z_1,\ldots,z_m\rangle\\
 &\simeq \FG*_{\FM}Q_\beta\\
  &\simeq \cG(\beta).
\end{align*}
This implies the quasi-isomorphism \eqref{eq:LegendrianDGA_derived_Cohn}, as required.\hfill$\Box$

%%%%%%%%%%%%%%%%%%%%%%%%%%%%%%%%%%%%%%%%%%%
%%%%%%%%%%%%%%%%%%%%%%%%%%%%%%%%%%%%%%%%%%%
\subsection{ Derived Cohn localization over free algebras.}\label{ssec:derived_Cohn_underived} An important technical result we establish, which is strictly algebraic, will be that derived Cohn localizations over a free algebra are quasi-isomorphic to (underived) Cohn localizations. For underived Cohn localization, see \cite[Chapter 7]{CohnBook}, and for derived Cohn localization, see \Cref{ssec:Legendrian_DGA_derived_Cohn} above or \cite[Section 4.3]{BCL}.\\

\noindent Given a ring $R$ and $\Sigma$ a family of morphisms between finitely generated free right
\(R\)-modules, we denote by $R_\Sigma$ the (underived) universal Cohn localization along $\Sigma$, i.e.~inverting the maps in
\(\Sigma\). The natural map from $R$ to its Cohn localization $R_\Sigma$ will be denoted
\begin{equation}\label{eq:underived_Cohn}
\lambda_\Sigma:R\longrightarrow R_\Sigma
\end{equation}

\noindent Similarly, we denote by $L^{\mathrm{dga}}_\Sigma(R)$ the derived Cohn localization of $R$ along $\Sigma$, cf.~\cite[Definition 4.35]{BCL}. From a derived viewpoint, we regard both $R$ and $R_\Sigma$ as dg algebras concentrated in
homological degree \(0\), and $\lambda_\Sigma$ in \eqref{eq:underived_Cohn} as a morphism between them. As with any derived object, there exists a canonical morphism
\begin{equation}\label{eq:truncation_localizations}
t_0:L^{\mathrm{dga}}_\Sigma(R)\longrightarrow R_\Sigma
\end{equation}
from the derived Cohn localization to the underived one. The precise statement we prove reads as follows:

\begin{theorem}[Derived Cohn localization is underived over free algebras]\label{prop:ordinary-equals-derived-localization}
Let $R=\kk\langle z_1,\ldots,z_m\rangle$ be a free associative \(k\)-algebra and $\Sigma$ be a collection of morphisms between finitely generated free right \(R\)-modules. Then
\[
t_0:L^{\mathrm{dga}}_\Sigma(R)\longrightarrow R_\Sigma,
\]
is a quasi-isomorphism.
\end{theorem}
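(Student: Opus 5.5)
The plan is to use the characterization of derived Cohn localization in \cite{BCL}: the map $t_0:L^{\mathrm{dga}}_\Sigma(R)\to R_\Sigma$ is a quasi-isomorphism if and only if the localization $\lambda_\Sigma:R\to R_\Sigma$ is \emph{stably flat}, i.e.~$\Tor^R_i(R_\Sigma,R_\Sigma)=0$ for all $i\geq1$. Indeed, the homology of $L^{\mathrm{dga}}_\Sigma(R)$ is computed in \cite[Section 4.3]{BCL} (following Neeman--Ranicki and Dwyer), and the higher homology is governed precisely by these Tor groups. So I would first recall this criterion as a black box, and then reduce the theorem to the vanishing of $\Tor^R_{\geq1}(R_\Sigma,R_\Sigma)$.

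The key input is that $R=\kk\langle z_1,\ldots,z_m\rangle$ is a free algebra, hence a (two-sided) \emph{hereditary} ring: every submodule of a projective module is projective, so every right or left $R$-module has projective dimension at most one (e.g.~via the standard resolution $0\to M\otimes_\kk V\otimes_\kk R\to M\otimes_\kk R\to M\to0$ with $V=\mathrm{span}(z_1,\dots,z_m)$). Consequently $\Tor^R_i(-,-)=0$ automatically for $i\geq2$, and the only thing to check is $\Tor^R_1(R_\Sigma,R_\Sigma)=0$. For this I would invoke the classical result (Bergman--Dicks, and Neeman--Ranicki, Theorem 0.9 type statements) that a universal localization of a hereditary ring is again hereditary and is stably flat; the standard argument is that $R_\Sigma\otimes_R R_\Sigma\cong R_\Sigma$ (a general property of epimorphisms, which Cohn localizations are), and for hereditary $R$ one shows $\Tor_1^R(R_\Sigma,R_\Sigma)=0$ by writing $R_\Sigma$ as a left $R$-module with a length-one projective resolution $0\to P_1\to P_0\to R_\Sigma\to 0$ and checking that $R_\Sigma\otimes_R P_1\to R_\Sigma\otimes_R P_0$ remains injective, using that $\Sigma$ consists of maps of free modules which become invertible after tensoring.

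Thus the steps in order are: (1) state the BCL criterion identifying $H_i(L^{\mathrm{dga}}_\Sigma(R))$ with (a filtration whose graded pieces are controlled by) $\Tor^R_i(R_\Sigma,R_\Sigma)$, and $H_0$ with $R_\Sigma$ via $t_0$; (2) prove $R$ is hereditary; (3) deduce vanishing of $\Tor_{\geq2}$; (4) prove $\Tor_1^R(R_\Sigma,R_\Sigma)=0$. I expect step (4) to be the main obstacle, since injectivity after base change is exactly where a hereditary-but-not-flat pathology could appear; I would first try to quote Bergman--Dicks/Neeman--Ranicki for hereditary rings, and failing that, argue directly with the explicit resolution of $R_\Sigma$ by $R$-modules obtained from the Cohn construction, using that $R_\Sigma$ is itself hereditary (Bergman--Dicks) so that its $R$-module structure is filtered by free pieces.
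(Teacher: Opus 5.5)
Your proposal is correct and follows essentially the paper's route. Both arguments reduce to the fact that a universal localization of the hereditary ring $R$ is a homological epimorphism, i.e.\ stably flat. The paper cites this directly from Krause--\v{S}\v{t}ov\'{i}\v{c}ek, while you split it into $\operatorname{gl.dim}R\le 1$ plus vanishing of $\Tor_1$. Both then invoke Braun--Chuang--Lazarev to pass to the dg-algebra statement: the paper does so through the module-level derived localization, and you use the stable-flatness criterion directly.

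Your anticipated obstacle in step (4) is not one. $\Tor_1^R(R_\Sigma,R_\Sigma)=0$ holds for every universal localization (Schofield; Bergman--Dicks), so you need heredity only to kill $\Tor_{\geq 2}$, and you can drop your vague injectivity sketch in favour of that citation.
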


\noindent A direct consequence of \Cref{prop:ordinary-equals-derived-localization} is the vanishing of higher homology for derived Cohn localizations of free algebras:

\begin{corollary}
\label{cor:ordinary-equals-derived-localization2}
Let $R=\kk\langle z_1,\ldots,z_m\rangle$ be a free associative \(k\)-algebra and $\Sigma$ be a collection of morphisms between finitely generated free right \(R\)-modules. Then
\[
H_0\!\left(L^{\mathrm{dga}}_\Sigma(R)\right)\cong R_\Sigma,\qquad H_i\!\left(L^{\mathrm{dga}}_\Sigma(R)\right)=0,\quad \forall i\neq 0.
\]
\hfill$\Box$
\end{corollary}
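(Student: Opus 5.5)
The statement to prove is \Cref{cor:ordinary-equals-derived-localization2}. I would deduce it directly from \Cref{prop:ordinary-equals-derived-localization}, which we may assume.

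That theorem says the canonical map
\[
t_0:L^{\mathrm{dga}}_\Sigma(R)\longrightarrow R_\Sigma
\]
is a quasi-isomorphism. A quasi-isomorphism of dg algebras induces isomorphisms on all homology groups, and these isomorphisms are compatible with the induced multiplications. So for every $i\in\ZZ$ we get
\[
H_i\big(L^{\mathrm{dga}}_\Sigma(R)\big)\cong H_i(R_\Sigma).
\]

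Next, regard $R_\Sigma$ as a dg algebra concentrated in homological degree $0$ with zero differential. Its homology is then $R_\Sigma$ itself in degree $0$ and vanishes in every other degree. Combining this with the isomorphisms above gives $H_0(L^{\mathrm{dga}}_\Sigma(R))\cong R_\Sigma$, an isomorphism of algebras, and $H_i(L^{\mathrm{dga}}_\Sigma(R))=0$ for all $i\neq 0$.

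Finally, a consistency check. One could ask whether the model of $L^{\mathrm{dga}}_\Sigma(R)$ might have homology in negative degrees. It cannot: the explicit cofibrant models, such as pushouts of the form $\FG*_{\FM}Q_\beta$ in \Cref{ssec:Legendrian_DGA_derived_Cohn}, are generated in nonnegative degrees. In any case the quasi-isomorphism $t_0$ already settles all degrees $i\neq 0$ at once.

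There is no real obstacle here; all the substantive work is contained in \Cref{prop:ordinary-equals-derived-localization}.
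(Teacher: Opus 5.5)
Your proposal is correct and is the argument the paper intends: the corollary is stated without proof as a direct consequence of \Cref{prop:ordinary-equals-derived-localization}. Since $t_0$ is a quasi-isomorphism of dg algebras onto $R_\Sigma$, which is concentrated in degree $0$ with zero differential, taking homology gives exactly the stated isomorphism in degree $0$ and vanishing in all other degrees.
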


\noindent The proof of \Cref{prop:ordinary-equals-derived-localization} occupies the rest of this subsection.

\begin{proof}[Proof of \Cref{prop:ordinary-equals-derived-localization}] The proof is structured in the following steps:

\begin{enumerate}[label=$(\roman*)$]
    \item First, we show that the map
\begin{equation}\label{eq:homological_epi}
R_\Sigma\otimes_R^{\mathbf L}R_\Sigma\longrightarrow R_\Sigma
\end{equation}
induced by multiplication is a quasi-isomorphism, where $R_\Sigma$ is seen as an $R$-module via the localization morphism $\lambda_\Sigma$ in \eqref{eq:underived_Cohn}.

\item Second, we use $(i)$ to prove an isomorphism
\begin{equation}\label{eq:derived_localization}
L^{\mathrm{Mod(R)}}_\Sigma(R)\simeq R\otimes_R^{\mathbf L}R_\Sigma
\end{equation}
of $R$-modules, where $L^{\mathrm{Mod(R)}}_\Sigma(R)$ is the underlying $R$-module of the dg algebra $L^{\mathrm{dga}}_\Sigma(R)$.

\item Conclude from $(ii)$ the required equivalence $L^{\mathrm{dga}}_\Sigma(R)\simeq R_\Sigma$ induced by $t_0$.
\end{enumerate}

\noindent Let us provide the details and justification for each such step. First, for Step $(i)$, let us use that \cite[Corollary 4.3]{CohnBook} implies that the free associative algebra
\(R\) is right and left hereditary. Since \(\lambda_\Sigma:R\to R_\Sigma\) is a
universal localization of a hereditary ring, \cite[Theorem 6.1]{KrauseStovicek} implies that $\lambda$ is a homological ring
epimorphism, i.e.~the multiplication map \eqref{eq:homological_epi} is a quasi-isomorphism.\\

For Step $(ii)$, the goal is to show that the derived extension of scalars functor
\[
-\otimes_R^{\mathbf L}R_\Sigma:
\mathsf D(\operatorname{Mod}(R_\Sigma))
\longrightarrow
\mathsf D(\operatorname{Mod}(R))
\]
is actually the correct derived localization at the level of $R$-modules, from which \eqref{eq:derived_localization} will follow. To ease notation, let us denote such functor by $F:=-\otimes_R^{\mathbf L}R_\Sigma$, and recall that we denote by $\lambda_\Sigma:R\to R_\Sigma$ the universal localization. By Step~(i), $\lambda$ is a homological epimorphism, and so the induced restriction
\[
 (\lambda_\Sigma)_*:D(R_\Sigma)\longrightarrow D(R)
\]
is fully faithful and $F=-\otimes_R^{\mathbf L}R_\Sigma$ is left adjoint to $(\lambda_\Sigma)_*$.  The essential image consists of complexes whose homology modules lie in the essential image of $\Mod(R_\Sigma)\to\Mod(R)$.  By the universal property of universal localization, cf.~\cite[Section 6]{KrauseStovicek}, these are exactly the modules $M$ for which $\operatorname{Hom}_R(\sigma,M)$ is an isomorphism for every $\sigma\in\Sigma$.  Since the source and target of each $\sigma$ are finitely generated projective modules, a complex $X$ is derived $\Sigma$-local iff every $H_i(X)$ has this property.  Hence the essential image of $(\lambda_\Sigma)_*$ is precisely the full subcategory of derived $\Sigma$-local objects.  Therefore the adjunction unit $X\to(\lambda_\Sigma)_*F(X)$ exhibits $F$ as a Bousfield localization, as required. In particular, we conclude that
\[
 L^{\Mod(R)}_\Sigma(R)\simeq R\otimes_R^{\mathbf L}R_\Sigma\simeq R_\Sigma.
\]

For Step $(iii)$, the final step, consider the canonical truncation morphism
$$t_0:L^{\mathrm{dga}}_\Sigma(R)\longrightarrow R_\Sigma$$
from \eqref{eq:truncation_localizations}. By Step $(ii)$, the underlying $R$-module morphism is an isomorphism in the derived category $D(R)$, and thus a quasi-isomorphism of $R$-modules. Then \cite[Theorem 4.38]{BCL} implies that $t_0$ itself must be a quasi-isomorphism of dg algebras, since we have proven that the underlying $R$-module morphism is a quasi-isomorphism.
\end{proof}
\color{black}

%%%%%%%%%%%%%%%%%%%%%%%%%%%%%%%%%%%%%%%%%%%
%%%%%%%%%%%%%%%%%%%%%%%%%%%%%%%%%%%%%%%%%%%
\subsection{ Proof of Main Theorem.}\label{ssec:proof_of_main_theorem} It follows from \Cref{prop:dga-change}, \Cref{prop:derived-model} and \Cref{prop:ordinary-equals-derived-localization} that we have the chain of quasi-isomorphisms in \eqref{eq:sequence_qiso}:

\begin{equation*}
\SA(\La_\beta,\ST_{s})\simeq\cG(\beta)\simeq L^{\mathrm{dga}}_{\Sigma_\beta}(\kk\langle z_1,\ldots,z_m\rangle)\simeq \kk\langle z_1,\ldots,z_m\rangle_{\Sigma_\beta}.
\end{equation*}

\noindent Since the higher homology of $\kk\langle z_1,\ldots,z_m\rangle_{\Sigma_\beta}$ vanishes, as it is concentrated in degree 0, the higher homology of $\SA(\La_\beta,\ST_s)$ also vanishes. This establishes the required vanishing result for $\SA(\La_\beta,\ST_{s})$ over a field $\kk$. In order to deduce the vanishing of the higher homology of $\SA(\La_\beta,\ST_{s})$ over $\ZZ$, it suffices to use the following two facts:

\begin{enumerate}
    \item[(i)] By construction, the chain groups of the chain complex underlying the $\ZZ$-graded Legendrian dg algebra are free abelian groups, and the Legendrian dg algebra behaves well under base change.

    \item[(ii)] In general, for a non-negatively graded chain complex $C$ of free abelian groups with vanishing higher homologies $H_{*\geq1}(C\otimes \mathbb{Q})=0$ and $H_{*\geq1}(C\otimes \mathbb{F}_p)=0$, for all primes $p\in\mathbb{N}$, we also must have $H_{*\geq1}(C)=0$. Indeed, since $\mathbb{Q}$ is flat, the hypotheses imply that each $H_\ell(C)$ must be a torsion abelian group, $\ell\geq1$. By the universal coefficient theorem, the vanishing $H_{\ell+1}(C\otimes \mathbb{F}_p)=0$ implies $\mbox{Tor}_1^\ZZ(H_\ell(C),\mathbb{F}_p)=0$ for all primes $p\in\mathbb{N}$, and thus $H_\ell(C)=0$.
\end{enumerate}
By $(i)$ and the proven vanishing over an arbitrary field $\kk$, we can apply $(ii)$. This implies the required vanishing result for $\SA(\La_\beta,\ST_{s})$ over $\ZZ$.\\

\noindent Finally, $\SA(\La_\beta,\ST_{s})$ is endowed with one basepoint per strand, whereas $\SA(\La_\beta)$ in the statement of the Main Theorem is endowed with one basepoint per component of $\La_\beta$. Now, over a field $\kk$, adding a basepoint to a component of $\Lambda_\beta$ that already has a basepoint only changes the dg algebra by a free unital product with $(\kk\langle s^{\pm1}\rangle,\dd s=0)$, in the dg-category of dg algebras, see e.g.~\cite[Remark 3.1]{CasalsNg}. Therefore, over $\kk$, the vanishing of the higher homology of $\SA(\La_\beta,\ST_{s})$ implies the vanishing of the higher homology of $\SA(\La_\beta)$. The argument above lifts such vanishing over $\ZZ$, concluding the required vanishing of the higher Legendrian homology of $\SA(\La_\beta)$ over $\ZZ$.

\hfill$\Box$

%%%%%%%%%%%%%%%%%%%%%%%%%%%%%%%%%%%%%%%%%%%
%%%%%%%%%%%%%%%%%%%%%%%%%%%%%%%%%%%%%%%%%%%
%%%%%%%%%%%%%%%%%%%%%%%%%%%%%%%%%%%%%%%%%%%
\section{Sample computation of Hochschild homology for max-tb $(2,m)$-torus links}\label{sec:examples_applications}

There are a number of advantages to having obtained an underived model for the Legendrian contact dg algebra of $\La_\beta$, as in the Main Theorem. For instance, it becomes significantly simpler to compute derived invariants, as we can use classical homological methods. To illustrate this, we will now fully compute the Hochschild homology of the Legendrian contact dg algebra of $\La_\beta$ for any 2-stranded braid $\beta$ and over any field $\kk$.\\

In order to be as useful as possible, we will perform all computations with one basepoint per component. This distinguishes the computations and results for $\beta=\sigma_1^m$ depending on whether $m$ is odd, in which case $\La_\beta$ is a knot, or $m$ is even, in which case $\La_\beta$ is a 2-component link. For the former, we will keep $t:=t_1$ as a basepoint and specialize to $t_2=1$, whereas for the latter, the 2-component link case, we keep both $t_1$ and $t_2$ as basepoints. The main result we establish can be summarized as:

\begin{proposition}\label{prop:Hochschild_2stranded} Let $\beta=\sigma_1^m\in\mbox{Br}_2^+$ be a 2-stranded positive braid word and $\SA(\La_\beta)$ the Legendrian dg algebra associated to $\La_\beta$, with one basepoint per component and over a field $\kk$. Then the higher Hochschild homology vanishes:
$$\HH_q(\SA(\La_\beta))=0,\qquad q\geq2$$
and the remaining $\HH_0(\SA(\La_\beta))$ and $\HH_1(\SA(\La_\beta))$ can be computed explicitly.
\end{proposition}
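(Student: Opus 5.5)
By the Main Theorem and \Cref{cor:main1}, $\SA(\La_\beta)$ is quasi-isomorphic to the ordinary algebra $A_\beta:=H_0(\SA(\La_\beta))$. Hochschild homology is invariant under quasi-isomorphism of dg algebras over a field, so $\HH_*(\SA(\La_\beta))\cong\HH_*(A_\beta)$. This reduces the statement to classical Hochschild homology of an underived algebra, which I plan to compute via a length-one bimodule resolution.

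The first step is to identify $A_\beta$ concretely. Over $\kk$, the proof of the Main Theorem shows $A_\beta$ is obtained from the Cohn localization $R_{\Sigma_\beta}$, with $R=\kk\langle z_1,\ldots,z_m\rangle$, by specializing the redundant basepoint variable when $m$ is odd. For $n=2$, the set $\Sigma_\beta$ consists of two matrices. One is the $(1,1)$-entry $p_{11}$ of $P_\beta^{-1}$, a single element of $R$. The other is $P_\beta^{-1}$ itself, which is already invertible in $R$. Hence $R_{\Sigma_\beta}=R\langle p_{11}^{-1}\rangle$: a free algebra with one element universally inverted. In particular $R_{\Sigma_\beta}$ is a universal localization of a hereditary ring, and by \cite{KrauseStovicek} $\lambda:R\to R_\Sigma$ is a homological epimorphism.

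The second step is to establish $\mathrm{pd}_{A^e}A\le 1$ for $A:=R_\Sigma$, which gives the vanishing in degrees $\ge2$. Start from the standard resolution of $R$ as an $R^e$-module,
\[0\to R\otimes V\otimes R\to R\otimes R\to R\to 0,\]
where $V=\mathrm{span}(z_s)$. Apply $A\otimes_R-\otimes_R A$. Because $\lambda$ is a homological epimorphism, $A\otimes^{\mathbf L}_R R\otimes^{\mathbf L}_R A\simeq A\otimes^{\mathbf L}_R A\simeq A$, so this yields a projective $A^e$-resolution
\[0\to A\otimes V\otimes A\to A\otimes A\to A\to0\]
of length one. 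Equivalently, $A$ is quasi-free in the sense of Cuntz--Quillen, being a universal localization of a free algebra.

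For odd $m$, one must also check that specializing $t_2=1$ and keeping $t^{\pm1}$ preserves this. By the basepoint remark at the end of the proof of the Main Theorem, the algebra with one basepoint per strand is the algebra with one basepoint per component freely multiplied by a Laurent factor $\kk\langle s^{\pm1}\rangle$. Hochschild homology of free products is additive up to the $\HH_0$ of the ground field, and $\kk[s^{\pm1}]$ has Hochschild dimension $1$, so the vanishing in degrees $\ge2$ transfers in either direction. For even $m$, both basepoints are kept and no transfer is needed.

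The third step is to compute $\HH_0$ and $\HH_1$ from the complex
\[A\otimes V\xrightarrow{\ a\otimes z\mapsto az-za\ } A,\]
obtained by applying $A\otimes_{A^e}-$ to the resolution. This gives $\HH_0=A/[A,A]$ and $\HH_1=\ker$ of this map. I expect the main obstacle to be this explicit description, namely finding a normal form for $A=\kk\langle z\rangle\langle p_{11}^{-1}\rangle\langle t^{\pm1}\rangle$. There, $p_{11}$ is (up to sign) the continuant-type polynomial in $z_1,\ldots,z_m$ coming from $P_\beta^{-1}$. I would first try to rewrite the localization, via a change of variables, as a free algebra with Laurent variables adjoined, using the cyclic symmetry of $\sigma_1^m$. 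That would make cyclic words a basis of $\HH_0$, with $\HH_1$ given by the Laurent-variable classes; the vanishing in degrees $\ge2$ does not depend on this rewriting.
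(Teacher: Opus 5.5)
Your proof of the vanishing $\HH_q(\SA(\La_\beta))=0$ for $q\ge 2$ is correct. For $m$ even it is essentially the paper's argument: a length-one resolution, because $\Om^1_A$ is free for a localization of a free algebra. For $m$ odd it is a genuinely different route.

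The paper works with the one-relator presentation $\kk\langle z_1,\ldots,z_m\rangle/(1+\widetilde K_m)$. It builds a length-two resolution via the Diamond Lemma and Chouhy--Solotar, and then needs a separate filtration argument for the injectivity of $J_m$.

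You instead stay with one basepoint per strand, where $H_0=R_{\Sigma_\beta}=R[p_{11}^{-1}]$ for every $m$. You get a length-one bimodule resolution from the homological-epimorphism property. This step is sound: $A\otimes_R P_\bullet\otimes_R A$ has homology $\Tor^R_*(A,A)$, which is $A$ in degree $0$. You then descend to one basepoint per component through the free factor $\kk[s^{\pm1}]$. This bypasses the Chantraine--Ng--Sivek presentation, the Diamond Lemma and the injectivity of $J_m$. It also works verbatim for every positive braid on any number of strands, since $R_{\Sigma_\beta}$ is always a universal localization of a free algebra. What it does not give on its own is the paper's concrete description of $\HH_1$ for odd $m$.

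Two points need fixing.

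First, Hochschild homology of a free product is not additive in degrees $0$ and $1$. In $\kk\langle x,y\rangle=\kk[x]*\kk[y]$, the class of $xy$ in $\HH_0$ and the cycle $(y,x)$ in $\HH_1$ are mixed. Additivity does hold in degrees $\ge 2$, which is all you use. More simply, $s\mapsto 1$ makes $A_\beta$ a retract of $A_\beta*\kk[s^{\pm1}]\cong R_{\Sigma_\beta}$, so $\HH_q(A_\beta)$ is a direct summand of $\HH_q(R_{\Sigma_\beta})$.

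Second, for $m$ odd your two-term complex computes $\HH_*(R_{\Sigma_\beta})=\HH_*(A_\beta*\kk[s^{\pm1}])$, not $\HH_*(A_\beta)$. Because additivity fails in degree one, you cannot read $\HH_1(A_\beta)$ off it. So the explicit part of the statement is left open for odd $m$.

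Your proposed rewriting would not close this gap, for three reasons:
\begin{enumerate}
\item No free $t^{\pm1}$ survives in $H_0$; the $LU$-factorization forces $d_1=-p_{11}$, and so on.
\item Already for $m=2$, the abelianization $\kk[z_1,z_2][(1+z_1z_2)^{-1}]$ is not of the form $\kk[y_1,\ldots,y_a,s_1^{\pm1},\ldots,s_b^{\pm1}]$. Over $\mathbb{C}$ it has a non-constant unit, yet its spectrum has Euler characteristic $1$.
\item $\HH_1$ of a free algebra with Laurent variables is not spanned by Laurent classes anyway; e.g.\ $y^k\,dy\neq 0$ in $\HH_1(\kk[y])$.
\end{enumerate}

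The fix is to use what you have already proved. Base-changing the free bimodule $\Om^1_{R_{\Sigma_\beta}}$ along the retraction gives $\Om^1_{A_\beta}\oplus(A_\beta\otimes A_\beta)$. Hence $\Om^1_{A_\beta}$ is projective, and $\HH_1(A_\beta)=\ker(\Om^1_{A_\beta,\natq}\to A_\beta)$. With the one-relator presentation, $\Om^1_{A_\beta}$ is the cokernel of the paper's $\partial_2$. By right exactness, $\Om^1_{A_\beta,\natq}=A^m/\im J_m$, and you recover the paper's $\ker(\bar b_m)$ without needing injectivity of $J_m$.
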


\noindent The explicit computation of $HH_0(\SA(\La_\beta))$ and $HH_1(\SA(\La_\beta))$ can be found in \Cref{prop:HH_even_case} and \Cref{prop:HH_odd_case} below. Similar results can be obtained for variants of Hochschild homology. For instance, it follows from \Cref{prop:Hochschild_2stranded} that the relative negative cyclic homology associated to the extension of scalars functor $F:\Perf(C_{-*}(\Omega\La_\beta))\lr\Perf(\SA(\La_\beta))$ vanishes in degree 3 and higher for non-trivial 2-stranded braids. For degree 2, one can then use \cite[Section 3.1]{CGGS24} to explicitly describe the relative Calabi-Yau structure carried by the extension of scalars functor. Let us now focus on computing $HH_*(\SA(\La_\beta))$ for 2-stranded braids and prove \Cref{prop:Hochschild_2stranded}.

%%%%%%%%%%%%%%%%%%%%%%%%%%%%%%%%%%%%%%%%%%%
%%%%%%%%%%%%%%%%%%%%%%%%%%%%%%%%%%%%%%%%%%%
\subsection{ Basic facts about Hochschild homology}\label{ssec:hochschild_review} We include a brief reminder of Hochschild homology, as follows. Let $A$ be a unital $\kk$-algebra and define $A^e:=A\otimes_{\kk}A^{\op}$, so that an $A$-bimodule is equivalent to a left $A^e$-module.  Since $A$ is projective over $\kk$, the Hochschild homology groups of $A$ can be computed as:
\begin{equation}\label{eq:HHtor}
\HH_q(A)=\Tor_q^{A^e}(A,A).
\end{equation}
%This is proved by tensoring the bar resolution with $A$; see Loday
%\cite[Proposition~1.1.12 and Proposition~1.1.13]{Loday}.  The normalized Hochschild complex is obtained by quotienting the bar complex by the acyclic degenerate subcomplex; see \cite[Section~1.1.14, especially Proposition~1.1.15]{Loday}.
Therefore, if $P_\bullet\longrightarrow A$ is any projective $A^e$-resolution, then the homology of $A\otimes_{A^e}P_\bullet$ computes $\HH_\bullet(A)$. We will use the notation
\[
 A_\natq:=A/[A,A],\qquad
 \Om^1_{A,\natq}:=\Om^1_{A}/[A,\Om^1_{A}],
\]
where $\Om^1_{A}$ is the module of non-commutative K\"ahler differentials of $A$.

%%%%%%%%%%%%%%%%%%%%%%%%%%%%%%%%%%%%%%%%%%%
%%%%%%%%%%%%%%%%%%%%%%%%%%%%%%%%%%%%%%%%%%%
\subsection{ Legendrian dg algebras for 2-stranded braids}\label{ssec:dga_2stranded_review} For the case of a 2-stranded braid word $\beta=\sigma_1^{m}\in\mbox{Br}^+_2$, $m\in\mathbb{N}$, the $0$th homology $A_\beta=H_0(\SA(\La_\beta);\kk)$ of $\SA(\La_\beta)$ has a particularly simple form, see e.g.~\cite[Section 3]{chantraine2019representations}. Specifically, consider the left and right continuants, respectively denoted $K_{i}$ and $\widetilde K_i$, given recursively by
\begin{align*}
 K_{-1}&:=0,&K_0&:=1,&K_j&:=z_jK_{j-1}+K_{j-2},\\
 \widetilde K_{-1}&:=0,&\widetilde K_0&:=1,&
 \widetilde K_j&:=\widetilde K_{j-1}z_j+\widetilde K_{j-2}.
\end{align*}
Thus $K_1=\widetilde K_1=z_1$, and the continuants are understood as elements in the free algebra $\kk\langle z_1,\ldots,z_m\rangle$. It follows from \Cref{cor:main1}, combined with \cite[Section 3]{chantraine2019representations} or \cite[Section 5.1]{CasalsNg}, that the Legendrian contact dg algebra $\SA(\La_\beta)$ of $\beta=\sigma_1^{m}\in\mbox{Br}^+_2$ admits the following model:

\begin{lemma}\label{lemma:Legendrian_dga_2_strands}
Let $\beta=\sigma_1^{m}\in\mbox{Br}^+_2$ and consider the Legendrian dg algebra $\SA(\La_\beta)$ over a field $\kk$, with one basepoint per component. Then $\SA(\La_\beta)$ is quasi-isomorphic to
\begin{equation}\label{eq:Legendrian_dga_2_strands}
\SA(\La_\beta)\simeq\begin{cases}
\kk\langle z_1,\ldots,z_m\rangle[K_m^{-1}] & \mbox{ if $m$ is even},\\
\kk\langle z_1,\ldots,z_m\rangle/(1+\widetilde K_m), & \mbox{ if $m$ is odd}.
\end{cases}
\end{equation}
In particular, the Hochschild homology $HH_\bullet(\SA(\La_\beta))$ is isomorphic to the Hochschild homology of the corresponding $($underived$)$ algebra on the right-hand side of \eqref{eq:Legendrian_dga_2_strands}.\hfill$\Box$
\end{lemma}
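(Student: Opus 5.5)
By the Main Theorem and \Cref{cor:main1}, $\SA(\La_\beta)$ is quasi-isomorphic to the ordinary algebra $A_\beta=H_0(\SA(\La_\beta))$. So the plan is to identify $A_\beta$ explicitly for $\beta=\sigma_1^m$ by computing the underived Cohn localization. Hochschild homology is invariant under quasi-isomorphism of dg algebras, which will then give the final sentence of the lemma.

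\textbf{One basepoint per strand.} The sequence \eqref{eq:sequence_qiso} gives $H_0(\SA(\La_\beta,\ST_s))\cong\kk\langle z_1,\ldots,z_m\rangle_{\Sigma_\beta}$. For $n=2$ the set $\Sigma_\beta$ has only two members.
\begin{itemize}
\item The $1\times 1$ leading submatrix is the $(1,1)$-entry of $P_\beta^{-1}=P_1^{-1}(z_m)\cdots P_1^{-1}(z_1)$.
\item The $2\times 2$ submatrix is $P_\beta^{-1}$ itself, which is already invertible.
\end{itemize}
By induction on $m$, using $P_1^{-1}(z)=\begin{pmatrix}-z&1\\1&0\end{pmatrix}$, the entries of $P_\beta^{-1}$ are signed continuants. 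The $(1,1)$-entry should be $\pm K_m$ (possibly with $z_i\mapsto -z_i$), and the sign substitution is harmless. Hence the algebra with one basepoint per strand is $\kk\langle z_1,\ldots,z_m\rangle[K_m^{-1}]$, up to an automorphism of the free algebra. This must be checked carefully against the left/right continuant convention, since the product order is reversed.

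\textbf{Passing to one basepoint per component.} Here I use the basepoint variables $T=\diag(t_1,t_2)=D$ from the proof of \Cref{prop:dga-change}. In $\cG(\beta)$ at the level of $H_0$, $d_1$ and $d_2$ are the diagonal entries of the unique $DLU$-factorization of $-P_\beta^{-1}$, so $d_1=\pm K_m$ and $d_2$ is the Schur complement.

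\emph{$m$ even.} There are two components and we keep both basepoints, so the algebra is unchanged: $\kk\langle z_i\rangle[K_m^{-1}]$.

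\emph{$m$ odd.} There is one component, and the relevant (gauge-fixing) operation is to specialize $t_2=1$, as explained before \Cref{prop:Hochschild_2stranded}. Since $t_2$ is a free invertible generator in the one-per-strand model (cf.\ the free-product remark in \Cref{ssec:proof_of_main_theorem}), setting it to $1$ at the level of $H_0$ imposes one relation. Computing the Schur complement $d_2=x_{22}-x_{21}x_{11}^{-1}x_{12}$ with continuant identities, the relation $d_2=1$ should reduce to a relation that forces $K_m$ to be invertible automatically, ultimately becoming $1+\widetilde K_m=0$. Here $\widetilde K_m$ arises as the $(2,2)$- or $(1,2)$-type entry combined with the determinant-like identity $K_m\widetilde K_{m-2}-K_{m-1}\widetilde K_{m-1}=\pm1$ for noncommutative continuants. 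After this relation, $K_m$ has an explicit inverse, so the localization is absorbed into the quotient.

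\textbf{Main obstacle.} The odd case is the hardest step: carrying out the noncommutative continuant bookkeeping that shows the specialized localization is exactly $\kk\langle z\rangle/(1+\widetilde K_m)$. I would first check it for $m=1,3$, then run the induction using the recursions for $K_j$ and $\widetilde K_j$. Alternatively, one could simply cite \cite[Section 3]{chantraine2019representations} and \cite[Section 5.1]{CasalsNg} for the $H_0$ computation, and use the Main Theorem only to upgrade it to a quasi-isomorphism.
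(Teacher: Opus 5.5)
Your skeleton is the paper's. There the lemma is justified only by Corollary~\ref{cor:main1} together with the $H_0$ computations of \cite[Section 3]{chantraine2019representations} or \cite[Section 5.1]{CasalsNg}, which is exactly your fallback. Your main line is a more self-contained route. It reads $H_0$ off the Cohn-localization description $\kk\langle z_1,\ldots,z_m\rangle_{\Sigma_\beta}$, using Lemma~\ref{lem:LU_decomposition} and the identification $D=T$ from Lemma~\ref{prop:dga-change}, and then deletes a basepoint, so no external computation is needed. This route works, and your even case is fine as stated. In fact no sign substitution or left/right ambiguity arises. The recursion $P_{\sigma_1^j}^{-1}=P_1^{-1}(z_j)P_{\sigma_1^{j-1}}^{-1}$ gives $(P_\beta^{-1})_{11}=(-1)^mK_m$ exactly, with the left continuant. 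The recursion $P_{\sigma_1^j}=P_{\sigma_1^{j-1}}P_1(z_j)$ gives $(P_\beta)_{22}=\widetilde K_m$.

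The odd case, however, does not go through as you wrote it. The identity $K_m\widetilde K_{m-2}-K_{m-1}\widetilde K_{m-1}=\pm1$ is false: for $m=2$ the left side is $z_2z_1+1-z_1^2$. The correct identities come from $P_\beta^{-1}P_\beta=\mathrm{Id}$ and involve continuants in the shifted variables $z_2,\ldots,z_m$.

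No continuant identity is actually needed. In $H_0$ one has $DLU=-P_\beta^{-1}$, hence $U^{-1}L^{-1}D^{-1}=-P_\beta$. Since $U^{-1}$ and $L^{-1}$ are unitriangular, the $(2,2)$-entry reads $d_2^{-1}=-\widetilde K_m$. So $t_2=d_2=1$ is precisely the relation $1+\widetilde K_m=0$.

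The inverse of $K_m$ is then automatic. In any ring where $\widetilde K_m=-1$, the mutually inverse matrices $Z=P_\beta$ and $Y=P_\beta^{-1}$ have $Z_{22}$ invertible. The entries of $YZ=ZY=\mathrm{Id}$ then show that $Y_{11}=-K_m$ has two-sided inverse $Z_{11}-Z_{12}Z_{22}^{-1}Z_{21}$. Hence $\kk\langle z\rangle[K_m^{-1}]/(1+\widetilde K_m)=\kk\langle z\rangle/(1+\widetilde K_m)$.

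Finally, the right justification for specializing is different from the one you give. Deleting the second basepoint is literally the dg quotient by the degree-zero cycle $t_2-1$, and for non-negatively graded dg algebras $H_0$ commutes with such a quotient. The free-product remark is not what is needed here, and $t_2$ is not a free generator of $H_0$, where it equals $-\widetilde K_m^{-1}$.
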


\noindent In the statement of \Cref{lemma:Legendrian_dga_2_strands} we have used $\kk\langle z_1,\ldots,z_m\rangle[K_m^{-1}]$ to denote the universal localization, adjoining a two-sided inverse to $K_m$. Note that for $m$ even, the basepoints $t_1$ and $t_2$ in $\SA(\La_\beta)$ map to
$$t_1\mapsto-K_m,\qquad t_2\mapsto-\widetilde K_m^{-1},$$
under the identification in \eqref{eq:Legendrian_dga_2_strands}. It is thus relevant to emphasize that $\widetilde K_m$ is invertible because $-K_m$ is invertible, as follows from their recursive definition. Now, due to the parity dependence, which results from the fact that $\La_\beta$ has one or two components depending on whether $m$ is odd or even, we will now separate the computation of Hochschild homology for $m$ even and odd.

%%%%%%%%%%%%%%%%%%%%%%%%%%%%%%%%%%%%%%%%%%%
%%%%%%%%%%%%%%%%%%%%%%%%%%%%%%%%%%%%%%%%%%%
\subsection{ Computation for $m$ even}\label{ssec:HH_m_even} Following \Cref{lemma:Legendrian_dga_2_strands}, we ease notation and set $A:=\kk\langle z_1,\ldots,z_m\rangle[K_m^{-1}]$, where $K_m$ is the continuant as defined above. To describe its Hochschild homology, it is convenient to consider the map
\begin{equation}\label{eq:b_map}
 b_m:A^m\longrightarrow A,\qquad
 b_m(a_1,\ldots,a_m)=\sum_{i=1}^m[a_i,z_i].
\end{equation}
We now prove the following:

\begin{proposition}\label{prop:HH_even_case} Let $A:=\kk\langle z_1,\ldots,z_m\rangle[K_m^{-1}]$, then
\[
 \HH_q(A)\cong
 \begin{cases}
 A_\natq,&q=0,\\
 \ker(b_m:A^m\to A),&q=1,\\
 0,&q\ge2.
 \end{cases}
\]
\end{proposition}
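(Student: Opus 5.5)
We compute \(\HH_\bullet(A)=\Tor^{A^e}_\bullet(A,A)\), as in \eqref{eq:HHtor}, using a length-one projective bimodule resolution of \(A\). The key input is that \(A=R[K_m^{-1}]\), with \(R=\kk\langle z_1,\ldots,z_m\rangle\), is a universal localization of the free algebra \(R\) (inverting the \(1\times1\) matrix \(K_m\)). By \cite[Theorem 6.1]{KrauseStovicek} and the proof of \Cref{prop:ordinary-equals-derived-localization}, \(\lambda:R\to A\) is a homological epimorphism.

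\textbf{Step 1: a resolution of \(A\).} For \(R\) there is the standard resolution
\[
0\to \bigoplus_{i=1}^m R\otimes V_i\otimes R\xrightarrow{\ \delta\ } R\otimes R\to R\to 0,\qquad \delta(1\otimes e_i\otimes 1)=z_i\otimes1-1\otimes z_i,
\]
where \(V_i=\kk e_i\). Applying \(A\otimes_R(-)\otimes_R A\) gives a complex
\[
0\to\bigoplus_i A\otimes e_i\otimes A\to A\otimes A\to A\otimes_R A\to0 .
\]
It computes \(A\otimes^{\mathbf L}_R R\otimes^{\mathbf L}_R A=A\otimes^{\mathbf L}_RA\), because the terms of the \(R\)-resolution are free, hence flat, on each side. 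The homological epimorphism property says exactly that \(A\otimes^{\mathbf L}_RA\simeq A\): the multiplication map \(A\otimes_RA\to A\) is an isomorphism and \(\Tor^R_{>0}(A,A)=0\). Therefore
\[
0\to\bigoplus_i A\otimes e_i\otimes A\to A\otimes A\to A\to0
\]
is a projective \(A^e\)-resolution of \(A\) of length one. Equivalently, \(\Om^1_A\cong A\otimes_R\Om^1_R\otimes_RA\) is free on the \(dz_i\). This immediately gives \(\HH_q(A)=0\) for \(q\geq2\).

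\textbf{Step 2: apply \(A\otimes_{A^e}-\).} Tensoring the resolution with \(A\) over \(A^e\) gives the two-term complex
\[
A^m\longrightarrow A,\qquad (a_1,\ldots,a_m)\mapsto\sum_i \bigl(a_iz_i-z_ia_i\bigr),
\]
since \(A\otimes_{A^e}(A\otimes e_i\otimes A)\cong A\) and \(A\otimes_{A^e}(A\otimes A)\cong A\). Up to sign this map is \(b_m\) from \eqref{eq:b_map}. Hence \(\HH_1(A)=\ker b_m\) and \(\HH_0(A)=A/\im b_m\).

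\textbf{Step 3: identify \(\HH_0\).} The image of \(b_m\) is the span of \([a,z_i]\) for \(a\in A\). We must show it equals \([A,A]\), which is consistent with \(\HH_0=A_\natq\) holding automatically. The cleanest route is through Step 1: \(\HH_0\) of any algebra is \(A/[A,A]\), so the cokernel is forced to be \(A_\natq\). Directly, the identity \([a,xy]=[ax,y]+[ya,x]\) shows that \([A,A]\) is generated by commutators with algebra generators. For the inverse one uses \([a,K_m^{-1}]=-[K_m^{-1}aK_m^{-1},K_m]\), and \(K_m\) is a polynomial in the \(z_i\).

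\textbf{Main obstacle.} The main obstacle is Step 1, namely justifying that \(A\otimes_RA\to A\) is an isomorphism with vanishing higher Tor. I will invoke the homological epimorphism property already established in Step \((i)\) of the proof of \Cref{prop:ordinary-equals-derived-localization}. A secondary point is checking that the tensored complex really computes \(A\otimes^{\mathbf L}_RA\), which uses flatness of the free bimodule resolution of \(R\) on each side.
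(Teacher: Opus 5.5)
Your proof is correct, but it reaches the length-one bimodule resolution by a different route from the paper.

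\textbf{The paper's route.} It never computes a $\Tor$ over $R=\kk\langle z_1,\ldots,z_m\rangle$. It identifies $\Omega^1_A\cong\bigoplus_i A\,dz_i\,A$ directly, by observing that a derivation of $A$ into any bimodule is freely determined by its values on the $z_i$. The extension across $K_m^{-1}$ is forced by $D(K_m^{-1})=-K_m^{-1}D(K_m)K_m^{-1}$. It then plugs this into the universal sequence $0\to\Omega^1_A\to A\otimes A\to A\to 0$, which is exact for every algebra \cite{CQ}.

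\textbf{Your route.} You base-change the standard resolution of $R$ along $\lambda:R\to A$. You then use the homological-epimorphism property from Step $(i)$ of the proof of \Cref{prop:ordinary-equals-derived-localization}, so the base-changed complex computes $A\otimes^{\mathbf L}_R A\simeq A$. Your justification via freeness of the terms on each side is sound.

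\textbf{How the two relate.} They are two faces of the same fact. Since $\ker\mu=\Omega^1_A$ for any algebra, exactness of your complex $0\to\bigoplus_i A\otimes e_i\otimes A\to A\otimes A\to A\to 0$ is equivalent to $\Omega^1_A$ being free on the $dz_i$. On your side, the only nontrivial inputs are $A\otimes_R A\cong A$ and $\Tor^R_1(A,A)=0$; $\Tor^R_{\geq 2}$ vanishes automatically from the length-one resolution of $R$.

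\textbf{What each buys.} The paper's argument is elementary and self-contained: it needs only that derivations extend uniquely along the localization. Yours imports \cite[Theorem 6.1]{KrauseStovicek}. In exchange it is more conceptual, reuses the machinery behind the Main Theorem, and applies verbatim to any homological epimorphism out of a free algebra.

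Your Step 3 is correct but superfluous. The cokernel of the degree-one map in any tensored bimodule resolution is automatically $A/[A,A]$. Also, with the identification $a\otimes(x\otimes y)\mapsto yax$, the map is exactly $b_m$, with no sign.
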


\begin{proof}
First, we claim that there is an isomorphism of $A$-bimodules
\begin{equation}\label{eq:omega-free}
\Om^1_{A}\cong\bigoplus_{i=1}^m A\,dz_i\,A,
\end{equation}

and thus $\Om^1_A$ is free over $A^e$. Indeed, for any $A$-bimodule $M$, any arbitrary choices $D(z_i)\in M$ determine a
unique derivation on $\kk\langle z_1,\ldots,z_m\rangle$. It extends uniquely across the universal
localization because
\[
 D(K_m^{-1})=-K_m^{-1}D(K_m)K_m^{-1}.
\]
Thus we have an isomorphism $\Der_\kk(A,M)\cong M^m$, naturally in $M$.  The left-hand side $\Der_\kk(A,M)$ of such isomorphism is represented
by $\Om^1_A$, see e.g.~\cite[Section~2, Proposition~2.4]{CQ}, and the right side $M^m$ by the bimodule in \eqref{eq:omega-free}. Therefore \eqref{eq:omega-free} follows from the Yoneda Lemma.\\

Now, in general, for an algebra $A$, the universal differential sequence
\begin{equation}\label{eq:univseq}
0\longrightarrow\Om^1_A\xrightarrow{j}A\otimes_\kk A
\xrightarrow{\mu}A\longrightarrow0,
\qquad j(a\,db)=ab\otimes1-a\otimes b,
\end{equation}

is exact, where $\mu$ is the algebra product, cf.~e.g.~\cite[Proposition~2.5]{CQ}.  By 
\eqref{eq:omega-free} above, $\Om^1_A$ is free, and thus projective, over $A^e$. Hence \eqref{eq:univseq} is a projective $A^e$-resolution
of $A$ of length one. (See also \cite[Proposition~3.3]{CQ}.) In order to compute Hochschild homology as in \eqref{eq:HHtor}, we tensor the resolution part of \eqref{eq:univseq} with $A$ over
$A^e$ to obtain
\begin{equation}\label{eq:evencomplex}
0\longrightarrow A^m\xrightarrow{b_m}A\longrightarrow0.
\end{equation}

Indeed, we obtain \eqref{eq:evencomplex} because $A\otimes_{A^e}\Om^1_A\simeq\Om^1_{A,\natq}$ and cyclic tensoring sends
$j(a\,db)$ to $ab-ba$.  Thus the differential is indeed $b_m$ and so taking
homology in \eqref{eq:evencomplex} proves the statement of the proposition.
\end{proof}
\color{black}

%%%%%%%%%%%%%%%%%%%%%%%%%%%%%%%%%%%%%%%%%%%
%%%%%%%%%%%%%%%%%%%%%%%%%%%%%%%%%%%%%%%%%%%
\subsection{ Computation for $m$ odd}\label{ssec:HH_m_odd} Similarly to \Cref{ssec:HH_m_even}, we use \Cref{lemma:Legendrian_dga_2_strands} and ease notation by setting $A:=\kk\langle z_1,\ldots,z_m\rangle/(1+\widetilde K_m)$, where $\widetilde K_m$ is the $m$th continuant as defined above. To describe its Hochschild homology, first write every monomial occurrence of $z_i$ in $1+\widetilde K_m$ as $p z_iq$, with its coefficient implicitly understood, and define the map
\begin{equation}\label{eq:Jm_map}
J_m:A\longrightarrow A^m,\qquad J_m(a)_i:=\sum_{p z_iq\in 1+\widetilde K_m}qap,
\end{equation}
where the sum runs over all the monomials of $1+\widetilde K_m$. The map $b_m$ defined by \eqref{eq:b_map}, now with $A:=\kk\langle z_1,\ldots,z_m\rangle/(1+\widetilde K_m)$, then descends to a map
$$\bar b_m:A^m/\im J_m\to A.$$
In this case, we will now prove the following:

\begin{proposition}\label{prop:HH_odd_case} Let $A:=\kk\langle z_1,\ldots,z_m\rangle/(1+\widetilde K_m)$, then
\[
 \HH_q(A)\cong
 \begin{cases}
 A_\natq,&q=0,\\
 \ker\!\left(\bar b_m:A^m/\im J_m\to A\right),&q=1,\\
 0,&q\ge2,
 \end{cases}
\]
\end{proposition}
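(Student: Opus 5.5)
We will follow the strategy of \Cref{prop:HH_even_case} and build an explicit projective $A^e$-resolution of $A$ of length two. The input is the standard presentation resolution for an algebra given by generators and a single relation. Write $F:=\kk\langle z_1,\ldots,z_m\rangle$, $r:=1+\widetilde K_m\in F$, and $A=F/(r)$. The conormal sequence for a quotient by a two-sided ideal $I=(r)$ gives an exact sequence of $A$-bimodules
\[
I/I^2\longrightarrow A\otimes_F\Om^1_F\otimes_F A\longrightarrow \Om^1_A\longrightarrow 0,
\]
with the first map sending the class of $r$ to $dr=\sum_{pz_iq}p\,dz_i\,q$. Since $\Om^1_F\cong\bigoplus_i F\,dz_i\,F$, the middle term is the free bimodule $\bigoplus_{i=1}^m A\,dz_i\,A$. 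Also $I/I^2$ is a quotient of the free bimodule $A\otimes A$ generated by $r$. Splicing with the universal sequence \eqref{eq:univseq} therefore gives a complex
\[
0\to A\otimes A\xrightarrow{\ \delta_2\ }\bigoplus_{i=1}^m A\otimes A\xrightarrow{\ \delta_1\ }A\otimes A\xrightarrow{\mu}A\to 0,
\]
with the maps
\[
\delta_2(1\otimes1)=\sum_{pz_iq\in r}p\otimes q\ \text{ in the $i$-th slot},\qquad \delta_1(1\otimes1)_i=z_i\otimes1-1\otimes z_i.
\]

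\textbf{Exactness.} Everything except the left end is formal, by \eqref{eq:univseq} and the conormal sequence. The key step, and the main obstacle, is injectivity of $\delta_2$, equivalently that $I/I^2$ is free of rank one on $r$. This is the \emph{one-relator} asphericity condition, in the sense of Anick/Dicks. I would try first to deduce it from the Main Theorem and \Cref{cor:main1}. By \Cref{lemma:Legendrian_dga_2_strands}, for $m$ odd $\SA(\La_\beta)$ is quasi-isomorphic to $A$. On the other hand, one should be able to present $\SA(\La_\beta)$, up to stable tame equivalence, as the semi-free dg algebra $F\langle c\rangle$ with $|c|=1$ and $\dd c=r$. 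This comes from eliminating $t$ via $t_2=1$ and cancelling the canceling pairs in the $2\times2$ model $\dd B=P_\beta^{-1}+DLU$. The vanishing $H_1(F\langle c\rangle)=0$ says exactly that any relation $\sum_k a_k r b_k=0$ in $F$ is a consequence of trivial ones. Those trivial relations are the boundaries $\dd(a c r b - a r c b)$, and this is precisely injectivity of $I/I^2\leftarrow A\otimes A$. If this identification proves awkward, the fallback is a direct argument. One checks that the leading term of $r$ under a deglex order, namely the monomial $z_1z_2\cdots z_m$, has no self-overlaps. Hence $\{r\}$ is a Gröbner basis with no ambiguities, and Anick's resolution has length two, which gives the same injectivity.

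\textbf{The Hochschild complex.} With the length-two free resolution in hand, we apply $A\otimes_{A^e}-$ exactly as in the proof of \Cref{prop:HH_even_case}. This produces the complex
\[
0\to A\xrightarrow{\ J_m\ }A^m\xrightarrow{\ b_m\ }A\to0.
\]
Here cyclic tensoring sends $\delta_2$ to $a\mapsto(\sum_{pz_iq\in r}qap)_i=J_m(a)$, and sends $\delta_1$ to $b_m$, up to the sign convention. It follows at once that $\HH_0(A)=A/\im b_m=A_\natq$ and that $\HH_1(A)=\ker b_m/\im J_m=\ker\bar b_m$; the map $\bar b_m$ is well defined because $b_m\circ J_m=0$, as is any composite of consecutive differentials. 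Finally, $\HH_q(A)=0$ for $q\ge3$ simply because the complex stops in degree two, which leaves $\HH_2(A)=\ker J_m$.

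\textbf{Vanishing of $\ker J_m$.} Showing $\ker J_m=0$ is the second delicate point. I would argue it using the same degree-one vanishing as before. By Morita/HKR-type reasoning, $\HH_2$ of $F\langle c\rangle$ is built from the cyclic words in $c$ together with the $dz_i$. Then $\ker J_m$ corresponds to the cycles $a\,c$ in the cyclic quotient $F\langle c\rangle_\natq$. Alternatively, one can compute directly: apply $J_m$ to $a$, look at the $z_1$-component, and use the leading term $qap$ coming from the monomial $z_1\cdots z_m$, read in a normal-form basis of $A$ given by the Gröbner basis above. A nonzero leading term then cannot cancel, so $J_m(a)\neq0$ for $a\neq0$. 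Both the injectivity of $\delta_2$ and the vanishing of $\ker J_m$ are the places where the hereditary/homological-epimorphism input behind the Main Theorem really enters. Everything else is formal.
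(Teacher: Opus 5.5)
Your proposal is correct once you commit to the fallback arguments, and those fallbacks are essentially the paper's proof. You use the same length-two bimodule resolution, exact because the leading word $z_1\cdots z_m$ of $1+\widetilde K_m$ has no self-overlaps; the paper gets this from Bergman's Diamond Lemma together with the two-sided Chouhy--Solotar resolution. That two-sided result is the right citation rather than Anick's one-sided resolution, which resolves the trivial module and not $A$ as a bimodule. You then pass to the same cyclic complex $0\to A\xrightarrow{J_m}A^m\xrightarrow{b_m}A\to0$, and you prove $J_m$ injective by the same leading-term argument: left multiplication by $z_2\cdots z_m$ sends normal words to normal words. You should state this last point explicitly, since it is what makes ``the leading term cannot cancel'' true.

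Drop your first-choice routes. $\SA(\La_\beta)$ cannot be stable tame equivalent to $F\langle c\rangle$ with $\partial c=r$. Its degree-zero part contains the non-scalar unit $t$, while every stabilization of $F\langle c\rangle$ is a free graded algebra with only scalar units. The HKR-type sketch for $\ker J_m$ does not by itself prove anything.
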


In contrast to \Cref{prop:HH_even_case}, the proof of \Cref{prop:HH_odd_case} requires a few more steps, as there is no projective resolution of $A$ over $A^e$ as simple as \eqref{eq:univseq} in the odd $m$ case. Before we prove \Cref{prop:HH_odd_case}, we will construct a 3-step free resolution, as follows:

\begin{lemma}\label{prop:oddresolution} Let $A:=\kk\langle z_1,\ldots,z_m\rangle/(1+\widetilde K_m)$, and consider the $\kk$-vector space $V$ spanned by the $m$ vectors $z_1,\ldots,z_m$. Then
\begin{equation}\label{eq:oddresolution}
0\to A\otimes A\xrightarrow{\partial_2}
A\otimes_\kk V \otimes_\kk A
\xrightarrow{\partial_1}A\otimes A\xrightarrow{\mu}A\to0,
\end{equation}

defined by $\partial_1(1\otimes z_i\otimes1):=z_i\otimes1-1\otimes z_i$ and
$$
 \partial_2(1\otimes1):=
 \sum_{i=1}^m\sum_{p z_iq\in\widetilde K_m}p\otimes z_i\otimes q,
$$
is a free $A^e$-resolution of $A$. Here the internal sum for $\partial_2$ runs over all monomials of $\widetilde K_m$.
\end{lemma}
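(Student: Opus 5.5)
We need to prove that \eqref{eq:oddresolution} is a free $A^e$-resolution. The plan is to compare it with the standard resolution of a one-relator algebra.

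Set $F:=\kk\langle z_1,\ldots,z_m\rangle$ and $f:=1+\widetilde K_m$, so that $A=F/(f)$. The first observation is that the sequence is a complex.

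\begin{itemize}
\item[(a)] Since $\mu\partial_1(1\otimes z_i\otimes 1)=z_i-z_i=0$, we have $\mu\circ\partial_1=0$.
\item[(b)] The element $\partial_2(1\otimes 1)$ is exactly the image of $f$ under the universal derivation $F\to A\otimes V\otimes A$, the Fox derivative. This holds because the constant term $1$ of $f$ contributes nothing. Applying $\partial_1$ and telescoping over each monomial $w=p z_i q$ gives $\sum_{\text{letters}}(pz_i\otimes q-p\otimes z_iq)=w\otimes 1-1\otimes w$. Summing over the monomials of $f$, we get $f\otimes 1-1\otimes f=0$ in $A\otimes A$, since $f=0$ in $A$.
\end{itemize}

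All three terms are free $A^e$-modules, so it remains to prove exactness.

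Exactness at $A$ and at $A\otimes A$ holds for any quotient of a free algebra. The kernel of $\mu$ is $\Omega^1_A$, by \eqref{eq:univseq}. The bimodule $\Omega^1_A$ is generated by the elements $dz_i$, and $j(dz_i)=z_i\otimes1-1\otimes z_i$ lies in the image of $\partial_1$. So the image of $\partial_1$ is all of $\ker\mu$.

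For exactness at $A\otimes V\otimes A$, use the standard presentation of $\Omega^1_A$ for a quotient $F/I$:
\[
\Omega^1_A\cong (A\otimes V\otimes A)\big/\bigl(A\,df\,A\bigr),
\]
obtained from $A\otimes_F\Omega^1_F\otimes_F A$ modulo the image of $I/I^2$. This identifies $\ker\partial_1$ with the sub-bimodule generated by $df=\partial_2(1\otimes1)$, which is $\operatorname{im}\partial_2$.

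The main obstacle is injectivity of $\partial_2$. This is the statement that $I/I^2$ is free of rank one as an $A$-bimodule on $f$, i.e.\ that $A$ has "one-relator Hochschild dimension $\le 2$". Equivalently, the relation $f$ is combinatorially aspherical in Anick's sense.

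My first approach is to exhibit a Gröbner basis of $(f)$ with no overlap ambiguities. Choose a monomial order in which the leading word of $\widetilde K_m$ is $z_1z_2\cdots z_m$; for instance, a degree-lexicographic order works, since $z_1\cdots z_m$ is the unique word of top degree. This leading word has all letters distinct, so it has no nontrivial self-overlap. Hence $\{f\}$ is a Gröbner basis, and the Anick resolution has no $3$-chains, so it stops at length $2$. That Anick resolution is exactly \eqref{eq:oddresolution}, which would give injectivity of $\partial_2$.

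As a concrete fallback, I would argue directly with normal forms. $A$ has a $\kk$-basis of words avoiding $z_1\cdots z_m$. Suppose $\partial_2(\xi)=0$ for some $\xi=\sum a\otimes b\in A\otimes A$. Look at the component of $\partial_2(\xi)$ carrying the leading term $a\,z_1\cdots z_{m-1}\otimes z_m\otimes b$, written in normal forms. A leading-term argument on $\xi$ then forces $\xi=0$.

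Alternatively, one can avoid Gröbner bases altogether. Use the isomorphism $A\cong\kk\langle z_1,\ldots,z_{m-1}\rangle$ localized at the continuant: solve $f=0$ for $z_m$ using invertibility from the recursion, as in \cite[Section 3]{chantraine2019representations}. By the argument of \Cref{prop:HH_even_case}, this localized algebra has Hochschild dimension $\le1$. Hence $\Omega^1_A$ is projective, and the kernel of $\partial_1$ is a projective direct summand. One would then check, via a rank argument after augmentation, that $\partial_2$ maps isomorphically onto it.
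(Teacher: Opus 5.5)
Your primary argument is the paper's proof: a degree-lexicographic order makes $z_1\cdots z_m$ the leading word, its letters are pairwise distinct so it has no self-overlaps, and the two-sided Anick-type resolution therefore stops at length two and coincides with \eqref{eq:oddresolution} (the paper invokes the Diamond Lemma together with \cite[Theorem~4.1]{CS}, which is the bimodule, non-augmented version you need, since classical Anick is one-sided over augmented algebras), and your separate check of low-degree exactness via $\Om^1_A$ is a harmless supplement. Discard your last alternative, though, because it fails: for $m=3$ the characters $z_1\mapsto-1$, $z_2\mapsto1$, $z_3\mapsto c$ kill $1+\widetilde K_3$ for every $c\in\kk$ and send $\widetilde K_2=z_1z_2+1$ to $0$, so $\widetilde K_{m-1}$ is not invertible in $A$ and $z_3$ is not even determined by $z_1,z_2$, so you cannot solve for $z_m$ and obtain a localization of $\kk\langle z_1,z_2\rangle$.
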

\begin{proof}
First, note that the composition $\partial_1\circ\partial_2$ vanishes, as it coincides with 
$$(1+\widetilde K_m)\otimes1-1\otimes (1+\widetilde K_m)=0.$$
To show that \eqref{eq:oddresolution} is indeed a resolution of $A$ by $A^e$-modules, we claim that the set of words not containing $z_1\cdots z_m$ forms a $\kk$-basis of $A$. Indeed, the polynomial $\widetilde K_m$ has the unique highest-degree monomial $z_1\cdots z_m$, as every other monomial has degree at
most $m-2$. (We order words first by degree and then lexicographically.) Since the letters in $z_1\cdots z_m$
are pairwise distinct, $z_1\cdots z_m$ has no proper self-overlap and the Diamond
Lemma \cite[Theorem~1.2]{Bergman} applies to show that the set of words not containing $z_1\cdots z_m$ form a $\kk$-basis of $A$. In fact, such a $\kk$-basis is the irreducible-word basis, with the sole $1$-ambiguity being $z_1\cdots z_m$, and without any higher ambiguities, as there are no overlaps. Now the required exactness of the $A^e$-module complex above follows from \cite[Theorem~4.1]{CS}. Note that we can apply their result, as their lower-order hypothesis holds because $1+\widetilde K_m-z_1\cdots z_m$ has degree at most $m-2$ if $m\geq3$. The case of $m=1$ is verified directly. In conclusion, we obtain that \eqref{eq:oddresolution} is a resolution of $A$ by free $A^e$-modules.
\end{proof}

\begin{proof}[Proof of \Cref{prop:HH_odd_case}]
By \Cref{prop:oddresolution}, the exact sequence \eqref{eq:oddresolution} is a resolution of $A$ by $A^e$-modules. By tensoring this resolution in \eqref{eq:oddresolution} cyclically we obtain
\begin{equation}\label{eq:oddcomplex}
0\longrightarrow A\xrightarrow{J_m}A^m\xrightarrow{b_m}A\longrightarrow0,
\end{equation}
because $p\otimes z_i\otimes q$ sends $a$ to $qap$ in the $i$th coordinate. Here the $b_m$ and $J_m$ maps are respectively defined by \eqref{eq:b_map} and \eqref{eq:Jm_map}. Let us now argue that $J_m:A\to A^m$ is injective.\\

\noindent For that we can use the word-length filtration, as follows. As established in the proof of \Cref{prop:oddresolution}, the set of words not containing $z_1\cdots z_m$ is a $\kk$-basis of $A$, and thus we have an isomorphism
$$\gr A\cong \kk\langle z_1,\ldots,z_m\rangle/(z_1\cdots z_m).$$
\noindent Now, the top-degree part of the first coordinate of
$J_m(a)$ is $(z_2\cdots z_m)a$ and left multiplication by $z_2\cdots z_m$ is
injective on this word basis: prefixing a normal word cannot create
$z_1\cdots z_m$, since the prefix has no initial $z_1$. Thus the leading part of $J_m$ is injective. Since the filtration is exhaustive, bounded below, and separated, we conclude that the entire map $J_m$ is injective.\\

From the injectivity of $J_m:A\to A^m$ and \eqref{eq:oddcomplex} we conclude that $\HH_q(A)=0$ for any $q\ge2$. For the $q=1$ case, we can quotient \eqref{eq:oddcomplex} by the acyclic subcomplex
$[A\xrightarrow{\sim}\im J_m]$, which leaves the two-term complex
\begin{equation}\label{eq:oddtwoterm}
0\longrightarrow A^m/\im J_m\xrightarrow{\bar b_m}A\longrightarrow0.
\end{equation}

Taking homology of \eqref{eq:oddtwoterm} proves the statement of \Cref{prop:HH_odd_case}.
\end{proof}

\bibliographystyle{plain}
\bibliography{main}

@article{Bergman,
  author  = {Bergman, G. M.},
  title   = {The diamond lemma for ring theory},
  journal = {Adv. Math.},
  volume  = {29},
  number  = {2},
  year    = {1978},
  pages   = {178--218}
}

@article{CS,
  author  = {Chouhy, S. and Solotar, A.},
  title   = {Projective resolutions of associative algebras and ambiguities},
  journal = {J. Algebra},
  volume  = {432},
  year    = {2015},
  pages   = {22--61}
}

@article{CQ,
  author  = {Cuntz, J. and Quillen, D.},
  title   = {Algebra extensions and nonsingularity},
  journal = {J. Amer. Math. Soc.},
  volume  = {8},
  number  = {2},
  year    = {1995},
  pages   = {251--289}
}

@article{chantraine2019representations,
  title={Representations, sheaves, and Legendrian (2,m) torus links},
  author={Chantraine, Baptiste and Ng, Lenhard and Sivek, Steven},
  journal={Journal of the London Mathematical Society},
  volume={100},
  number={1},
  pages={41--82},
  year={2019},
  doi={10.1112/jlms.12204},
  publisher={Wiley Online Library}
}

@article {Sylvan2016,
    AUTHOR = {Sylvan, Zachary},
     TITLE = {On partially wrapped {F}ukaya categories},
   JOURNAL = {J. Topol.},
  FJOURNAL = {Journal of Topology},
    VOLUME = {12},
      YEAR = {2019},
    NUMBER = {2},
     PAGES = {372--441},
      ISSN = {1753-8416,1753-8424},
   MRCLASS = {53D37},
  MRNUMBER = {3911570},
MRREVIEWER = {Xin\ Jin},
       DOI = {10.1112/topo.12088},
       URL = {https://doi.org/10.1112/topo.12088},
}

@article {Ganatraetal2023,
    AUTHOR = {Ganatra, Sheel and Pardon, John and Shende, Vivek},
     TITLE = {Sectorial descent for wrapped {F}ukaya categories},
   JOURNAL = {J. Amer. Math. Soc.},
  FJOURNAL = {Journal of the American Mathematical Society},
    VOLUME = {37},
      YEAR = {2024},
    NUMBER = {2},
     PAGES = {499--635},
      ISSN = {0894-0347,1088-6834},
   MRCLASS = {53D37 (53D40 57R17)},
  MRNUMBER = {4695507},
MRREVIEWER = {Benjamin\ Gammage},
       DOI = {10.1090/jams/1035},
       URL = {https://doi.org/10.1090/jams/1035},
}

@article{FuchsIshkhanov2004,
  author = {Fuchs, Dmitry B. and Ishkhanov, T.},
  title = {Invariants of {Legendrian} knots and decompositions of front diagrams},
  journal = {Moscow Mathematical Journal},
  volume = {4},
  number = {3},
  pages = {707--717},
  year = {2004}
}

@article{Sabloff2005,
  author = {Sabloff, Joshua M.},
  title = {Augmentations and rulings of {Legendrian} knots},
  journal = {International Mathematics Research Notices},
  volume = {2005},
  number = {19},
  pages = {1157--1180},
  year = {2005}
}

@article{MelvinShrestha2005,
  author = {Melvin, Paul and Shrestha, Sumana},
  title = {The nonuniqueness of {Chekanov} polynomials of {Legendrian} knots},
  journal = {Geometry \& Topology},
  volume = {9},
  number = {3},
  pages = {1221--1252},
  year = {2005}
}

@article{NgSabloff2006,
  author = {Ng, Lenhard and Sabloff, Joshua M.},
  title = {The correspondence between augmentations and rulings for {Legendrian} knots},
  journal = {Pacific Journal of Mathematics},
  volume = {224},
  number = {1},
  pages = {141--150},
  year = {2006}
}

@article{FuchsRutherford2011,
  author = {Fuchs, Dmitry and Rutherford, Dan},
  title = {Generating families and {Legendrian} contact homology in the standard contact space},
  journal = {Journal of Topology},
  volume = {4},
  number = {1},
  pages = {190--226},
  year = {2011}
}

@article{NgRutherford2013,
  author = {Ng, Lenhard and Rutherford, Dan},
  title = {Satellites of {Legendrian} knots and representations of the {Chekanov}-{Eliashberg} algebra},
  journal = {Algebraic \& Geometric Topology},
  volume = {13},
  number = {5},
  pages = {3047--3073},
  year = {2013}
}

@article{HenryRutherford2015,
  author = {Henry, Michael B. and Rutherford, Dan},
  title = {Ruling polynomials and augmentations over finite fields},
  journal = {Journal of Topology},
  volume = {8},
  number = {1},
  pages = {1--37},
  year = {2015}
}

@article{Leverson2016,
  author = {Leverson, Caitlin},
  title = {Augmentations and rulings of {Legendrian} knots},
  journal = {Journal of Symplectic Geometry},
  volume = {14},
  number = {4},
  pages = {1089--1143},
  year = {2016}
}

@article{Leverson2017,
  author = {Leverson, Caitlin},
  title = {Augmentations and rulings of {Legendrian} links in ${\#}_k(S^1 \times S^2)$},
  journal = {Pacific Journal of Mathematics},
  volume = {288},
  number = {2},
  pages = {381--423},
  year = {2017}
}

@article{NgRutherfordShendeSivekZaslow2017,
  author = {Ng, Lenhard and Rutherford, Dan and Shende, Vivek and Sivek, Steven and Zaslow, Eric},
  title = {The cardinality of the augmentation category of a {Legendrian} link},
  journal = {Mathematical Research Letters},
  volume = {24},
  number = {6},
  pages = {1845--1874},
  year = {2017}
}

@article{Traynor2001,
  author = {Traynor, Lisa},
  title = {Generating function polynomials for {Legendrian} links},
  journal = {Geometry \& Topology},
  volume = {5},
  pages = {719--760},
  year = {2001}
}

@article {CasalsGao2022,
    AUTHOR = {Casals, Roger and Gao, Honghao},
     TITLE = {Infinitely many {L}agrangian fillings},
   JOURNAL = {Ann. of Math. (2)},
  FJOURNAL = {Annals of Mathematics. Second Series},
    VOLUME = {195},
      YEAR = {2022},
    NUMBER = {1},
     PAGES = {207--249},
      ISSN = {0003-486X,1939-8980},
   MRCLASS = {57K33 (53D10)},
  MRNUMBER = {4358415},
MRREVIEWER = {Youlin\ Li},
       DOI = {10.4007/annals.2022.195.1.3},
       URL = {https://doi.org/10.4007/annals.2022.195.1.3},
}

@article {CasalsGao2023,
    AUTHOR = {Casals, Roger and Gao, Honghao},
     TITLE = {A {L}agrangian filling for every cluster seed},
   JOURNAL = {Invent. Math.},
  FJOURNAL = {Inventiones Mathematicae},
    VOLUME = {237},
      YEAR = {2024},
    NUMBER = {2},
     PAGES = {809--868},
      ISSN = {0020-9910,1432-1297},
   MRCLASS = {53D12 (13F60 57K33)},
  MRNUMBER = {4768635},
       DOI = {10.1007/s00222-024-01268-y},
       URL = {https://doi.org/10.1007/s00222-024-01268-y},
}

@article {EliashbergPolterovich1996,
    AUTHOR = {Eliashberg, Y. and Polterovich, L.},
     TITLE = {Local {L}agrangian {$2$}-knots are trivial},
   JOURNAL = {Ann. of Math. (2)},
  FJOURNAL = {Annals of Mathematics. Second Series},
    VOLUME = {144},
      YEAR = {1996},
    NUMBER = {1},
     PAGES = {61--76},
      ISSN = {0003-486X,1939-8980},
   MRCLASS = {58F05 (57Q45 57R40)},
  MRNUMBER = {1405943},
MRREVIEWER = {Serge\ L.\ Tabachnikov},
       DOI = {10.2307/2118583},
       URL = {https://doi.org/10.2307/2118583},
}

@article{EtnyreHonda2005,
  author  = {Etnyre, John and Honda, Ko},
  title   = {Cabling and transverse simplicity},
  journal = {Annals of Mathematics},
  year    = {2005},
  volume  = {162},
  pages   = {1305--1333},
  doi     = {10.4007/annals.2005.162.1305}
}

@article{EliashbergFraser2009,
  author  = {Eliashberg, Yakov and Fraser, Maia},
  title   = {Topologically trivial Legendrian knots},
  journal = {Journal of Symplectic Geometry},
  year    = {2009},
  volume  = {7},
  pages   = {77--127},
  doi     = {10.4310/jsg.2009.v7.n2.a4}
}

@article{Chekanov2002,
  author  = {Chekanov, Yuri},
  title   = {Differential algebra of Legendrian links},
  journal = {Inventiones Mathematicae},
  year    = {2002},
  volume  = {150},
  pages   = {441--483},
  doi     = {10.1007/s002220200212}
}

@article{Eliashberg2000,
  author  = {Eliashberg, Y. and Givental, A. and Hofer, H.},
  title   = {Introduction to Symplectic Field Theory},
  journal = {Visions in Mathematics},
  year    = {2000},
  pages   = {560--673},
  doi     = {10.1007/978-3-0346-0425-3_4}
}

@article {Ng2008,
    AUTHOR = {Ng, Lenhard},
     TITLE = {Rational symplectic field theory for {L}egendrian knots},
   JOURNAL = {Invent. Math.},
  FJOURNAL = {Inventiones Mathematicae},
    VOLUME = {182},
      YEAR = {2010},
    NUMBER = {3},
     PAGES = {451--512},
      ISSN = {0020-9910,1432-1297},
   MRCLASS = {53D42 (55P50 57M25 57M27)},
  MRNUMBER = {2737704},
MRREVIEWER = {David\ E.\ Hurtubise},
       DOI = {10.1007/s00222-010-0265-8},
       URL = {https://doi.org/10.1007/s00222-010-0265-8},
}

@article{BCL,
  author  = {C. Braun and J. Chuang and A. Lazarev},
  title   = {Derived localisation of algebras and modules},
  journal = {Adv. Math.},
  volume  = {328},
  year    = {2018},
  pages   = {555--622},
}

@article{CGGS24,
  author  = {R. Casals and E. Gorsky and M. Gorsky and J. Simental},
  title   = {Algebraic weaves and braid varieties},
  journal = {Amer. J. Math.},
  volume  = {146},
  number  = {6},
  year    = {2024},
  pages   = {1469--1576}
}

@article{CasalsNg,
  author  = {R. Casals and L. Ng},
  title   = {Braid loops with infinite monodromy on the Legendrian contact {DGA}},
  journal = {J. Topol.},
  volume  = {15},
  number  = {4},
  year    = {2022},
  pages   = {1927--2016},
}

@book{CohnBook,
  author    = {P. M. Cohn},
  title     = {Free Rings and Their Relations},
  edition   = {2nd},
  series    = {London Mathematical Society Monographs},
  volume    = {19},
  publisher = {Academic Press},
  address   = {London},
  year      = {1985}
}

@article{etnyre_ng_2020,
  author  = {J. B. Etnyre and L. Ng},
  title   = {Legendrian contact homology in {$\mathbb{R}^3$}},
  journal = {Surveys in Differential Geometry},
  volume  = {25},
  year    = {2020},
  pages   = {103--161}
}

@article{KrauseStovicek,
  author  = {H. Krause and J. \v{S}\v{t}ov\'{i}\v{c}ek},
  title   = {The telescope conjecture for hereditary rings via Ext-orthogonal pairs},
  journal = {Adv. Math.},
  volume  = {225},
  number  = {5},
  year    = {2010},
  pages   = {2341--2364}
}

\textsc{RC: University of California Davis, Dept. of Mathematics, USA.\\}
\textit{Email address:} \texttt{casals@ucdavis.edu}
\vspace{0.2cm}

\textsc{AS: University of California Davis, Dept. of Mathematics, USA.\\}
\textit{Email address:} \texttt{asimons@ucdavis.edu}
\vspace{0.2cm}

\end{document}